\begin{document}

\numberwithin{figure}{section}
\newtheorem{thm1}{Theorem}[section]
\newtheorem{thm}[thm1]{Theorem}
\newtheorem{lem}[thm1]{Lemma}
\newtheorem*{thm2}{Theorem}
\newtheorem*{definition}{Definition}
\newtheorem{cor}[thm1]{Corollary}
\newtheorem*{claim}{Claim}
\newtheorem*{conj}{Conjecture}
\newtheorem*{rem}{Remark}
\newtheorem*{question}{Question}
\newtheorem{ex}{Example}[section]

\newtheorem{corollary}[thm1]{Corollary}
\newtheorem{conjecture}[thm1]{Conjecture}
\newtheorem{theorem}[thm1]{Theorem}
\newtheorem{lemma}[thm1]{Lemma}
\newtheorem{exer}[thm1]{Exercise}
\newtheorem{proposition}[thm1]{Proposition}

\newcommand{\bi}{\begin{itemize}}
\newcommand{\ei}{\end{itemize}}
\newcommand{\be}{\begin{enumerate}}
\newcommand{\ee}{\end{enumerate}}
\newcommand{\ds}{\displaystyle}
\newcommand{\ul}{\underline}

\interfootnotelinepenalty=10000

\title{Crossing Number Bounds in Knot Mosaics}
\date{\today}
\author{Hugh Howards, Andrew Kobin}

\maketitle

\abstract

% terminology already known
% major results leading to ours
% our result
% our terminology 
% extra motivation such as random knots? 

\noindent Knot mosaics are used to model physical quantum states. The mosaic number of a knot is the smallest integer $m$ such that the knot can be represented as a knot $m$-mosaic. In this paper we establish an upper bound for the crossing number of a knot in terms of the mosaic number. Given an $m$-mosaic and any knot $K$ that is represented on the mosaic, its crossing number $c$ is bounded above by $(m - 2)^{2} - 2$ if $m$ is odd, and by $(m - 2)^{2} - (m - 3)$ if $m$ is even.  In the process we develop a useful new tool called the mosaic  complement.

\section{Introduction}

In \cite{lk}, Lomonaco and Kauffman introduce a standard system of knot mosaics as a model of physical quantum states. Since then mosaics have been used in this original manner, but also as a tool by which to better understand knots themselves.

Much like crossing number and other easy to define invariants of knots it is not hard to get very loose bounds for mosaic number, but strong bounds can be difficult to compute.   Many of the recent known results can be found at the Online Encyclopedia of Integer Sequences webpage devoted to the number of $n \times n$ knot mosaics \cite{slo}.

Mosaics contain 5 distinct tiles, up to rotation, and all 11 orientations are shown below.  We label the tiles with roman numerals for the 5 types and when applicable the letters $a$ though $d$ for the distinct rotations of those types. We also introduce a type 0 tile which consists of a square with a dot in the center.  Type 0 tiles are not a part of a mosaic, and have not previously been used in the literature but will be used when we introduce a new tool called the mosaic  complement in Section~\ref{sec:mosaic  complementdef}.

\begin{figure}[tpb]

\begin{center}
\begin{tikzpicture}
  \node at (0,0) {\includegraphics{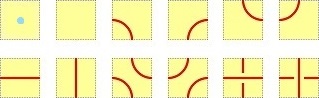}};
  \node at (-3.75,1.8) {0};
  \node at (-2.25,1.8) {I};
  \node at (-.75,1.8) {IIa};
  \node at (.75,1.8) {IIb};
  \node at (2.25,1.8) {IIc};
  \node at (3.75,1.8) {IId};
  \node at (-3.75,-1.8) {IIIa};
  \node at (-2.25,-1.8) {IIIb};
  \node at (-.75,-1.8) {IVa};
  \node at (.75,-1.8) {IVb};
  \node at (2.25,-1.8) {Va};
  \node at (3.75,-1.8) {Vb};
\end{tikzpicture}
\end{center}

\caption{The tiles of a mosaic and its mosaic  complement.}
\label{fig:tiles}
\end{figure}

For a positive integer $n$, define an \emph{$n$-mosaic} $M_n$ as an $n\times n$ matrix composed of mosaic tiles. As is typical when studying mosaics, we are only interested in mosaics which represent knots and links so we restrict to those cases in the standard way as follows.
A {\em connection point} is the midpoint of a tile edge which is also
the endpoint of an arc drawn on that tile.  Thus type I tiles have no connection points,
type II and III tiles have 2 connection points and type IV and V tiles have 4 connection points
as seen in Figure~\ref{fig:tiles}.
 {\em Contiguous} tiles are any tiles which fall directly next to each
other in the same column or row. We can then say that a tile within a mosaic
is {\em suitably connected}
if each of its connection points is identified with a connection
point of a contiguous tile.

% connection completion?

A \emph{link $n$-mosaic} is an $n$-mosaic with all of its tiles suitably connected so that after all the tiles are placed on the mosaic, 
the result is a projection of a link.  In such a mosaic, tiles of types II, III, IV, and V must have adjacent tiles that are also of one of these four types to extend the arcs started on those tiles.   

A \emph{knot $n$-mosaic} is a link $n$-mosaic that
corresponds to a projection of a one component link (a knot).
Thus every knot mosaic is a link mosaic, but not every
link mosaic is a knot mosaic.
 Define {\em the mosaic number} of a knot (or link) to be the smallest natural number $m$ such that the knot (link) is able 
 to be represented as a knot (link) $m$-mosaic.  See Figure~\ref{fig:trefoil} for an example of a 4-mosaic that is not suitably connected as well as a knot 4-mosaic (that is suitably connected).

\begin{figure}[tpb]
\centering
\includegraphics[scale=.9]{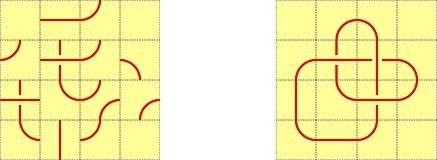}
\caption{Pictured above are two $4\times 4$ mosaics, but only the one on the right is suitably connected
to yield the projection of a knot.}
\label{fig:mosaicex}
\label{fig:trefoil}
\end{figure}

An important motivation for studying knot mosaics is the Lomonaco-Kauffman Conjecture, which states that knot mosaic theory is equivalent to classic (tame) knot theory. This was proven by Kuriya and Shehab in \cite{kuriya}, so as a result, we can treat knot equivalence and crossing number of knot mosaics in the usual way.

At times we will want to examine specific tiles within a mosaic.  
When specifying a given tile, we model subscripts after the entries in a matrix so the tile $R_{i,j}$ will refer to the tile in the $i^{th}$ row and
$j^{th}$ column, where columns are counted from the left and rows are counted {\bf from the bottom} (we diverge from matrix notation slightly here to allow row numbers to reflect a height function).
If we think of $M_n$ as an $n \times n$ square disk, the $4n-4$ tiles that intersect $\partial M_n$ will be called \emph{boundary tiles} and the other $(n-2)^2$ tiles will be referred to as the \emph{interior} of the mosaic and denoted $S$.  See Figure~\ref{fig:s} for a depiction of $S$ and the boundary tiles in a mosaic.  We will often be focused on $S$.

\begin{figure}[htp]
\centering
\includegraphics[scale=.9]{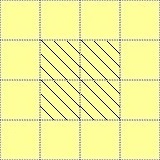}
\caption{In a $4\times 4$ mosaic the the interior $S$ is the shaded $2\times 2$ sub-mosaic.}
\label{fig:s}
\end{figure}

In this paper we are explore the relationship between crossing number and mosaic number.  We begin by listing some of the known results on crossing number and mosaics.  We start with Theorem 3.1 in \cite{gard}.

\begin{thm}[Upper Bound on Crossing Number] 
\label{thm:lowerbound} \cite{gard}
Given an $m$-mosaic, if a knot is representable on the mosaic then its crossing number is bounded above by: $c \leq (m - 2)^{2}$
\end{thm}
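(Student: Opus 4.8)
The plan is to bound the number of crossings appearing in the particular projection drawn on the mosaic and then invoke the definition of crossing number as a minimum taken over all diagrams. First I would observe that among the tile types, a crossing appears only in the type V tiles (Va and Vb), and each such tile contributes exactly one crossing to the projection, while all of the tiles of types I--IV contribute none. Consequently the number of crossings in the projection carried by the $m$-mosaic $M_m$ is precisely the number of type V tiles appearing in $M_m$, so it suffices to bound that count.

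The key geometric step is to show that every type V tile must lie in the interior $S$, and hence that there can be at most $|S| = (m-2)^2$ of them. For this I would use the suitably connected condition: each arc endpoint on an edge of a tile must be matched by an arc endpoint on the adjacent tile across that edge. A type V tile has arc endpoints on all four of its edges. A boundary tile, by definition, has at least one edge lying on $\partial M_m$, across which there is no adjacent tile to supply a matching endpoint, so an arc endpoint there would leave the projection improperly connected. Therefore no boundary tile can be a type V tile, and every crossing must occur on one of the $(m-2)^2$ interior tiles of $S$.

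Combining these two steps, the projection on $M_m$ has at most $(m-2)^2$ crossings. Since the crossing number $c$ of a knot is the minimum number of crossings taken over all of its diagrams, and the mosaic supplies one such diagram, we conclude $c \le (m-2)^2$. I do not expect a serious obstacle here; the only point requiring care is the interior-tile argument, namely the exclusion of a crossing from any boundary tile, which rests entirely on the fact that in a suitably connected mosaic no arc may cross $\partial M_m$.
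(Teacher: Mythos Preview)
Your proposal is correct and matches the paper's approach: the paper states this theorem as a cited result and does not give a standalone proof, but the key step you identify is exactly the content of Lemma~2.1, where the suitably-connected condition is used to show that boundary tiles cannot be crossing tiles, forcing all crossings into the $(m-2)^2$ interior tiles of $S$. Your additional observation that the mosaic diagram then witnesses $c \leq (m-2)^2$ via the definition of crossing number completes the argument in the intended way.
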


This upper bound follows immediately from the well known observation that all crossing tiles on an $n$-mosaic occur on the interior $S$ of a mosaic (see Lemma~\ref{lem:cross}) and $S$ has exactly $(n - 2)^{2}$ tiles. 
Using this bound, one can quickly find a lower bound for the mosaic numbers for many knots.
For a few simple knots such as the trefoil it yields the exact mosaic number.
 However, it is clear that this bound is of limited use when it comes to determining the mosaic number of more complex knots.
Ludwig, Evans, and Paat, for example, show that the relationship between crossing number and mosaic number can be subtle in \cite{lep} by building an infinite family of knots where each knot can only achieve its mosaic number in a projection on a mosaic that does not realize its crossing number.

While we will focus on an upper bound, Lee, Hong, Lee, and Oh give a lower bound on crossing number  in \cite{lee}  showing that the mosaic number of a non-trivial knot is always less than or equal to the crossing number of a knot plus 1.   In this paper, we sharpen the upper bound on crossing number by proving

\bigskip
\noindent {\bf Theorem~\ref{thm:newub}}.
Given an $m$-mosaic and any knot $K$ that is projected onto the mosaic, the crossing number $c$ of $K$ is bounded above by the following:\\
\begin{equation*}
  c \leq
  \begin{cases}
    (m - 2)^{2} - 2 & \quad \text{if $m = 2k + 1$}\\
    (m - 2)^{2} - (m - 3) & \quad \text{if $m = 2k$.}
  \end{cases}
\end{equation*}

In the next section we introduce a new tool, the {\em mosaic  complement}, denoted $C$, together with $T$, an ordered triple, associated to $C$.

\section{The definition of the mosaic  complement $C$ and $T$ an ordered triple associated to $C$}

\label{sec:mosaic  complementdef}

Given a mosaic $M_n$ for link $L$ (or knot $K$), we define the mosaic  complement on $S$, the interior tiles of $M_n$.
The mosaic  complement $C$ is obtained by replacing the tiles on $S$ in the following manner:
every type V  is replaced with a type I, type IV with type 0, type IIIa with type IIIb, type IIIb with type IIIa, type IIa with type IIc, type IIc with type IIa, type IIb with type IId, type IId with type IIb, and finally type I with either a type IVa or IVb (although this means the mosaic  complement is not uniquely defined, either type IVa or IVb is fine).

Intuitively, the mosaic  complement is complementary to the link in the mosaic in the sense that the union of the mosaic  complement and the mosaic will intersect the boundary of each tile in $S$ exactly four times - once on each of the tile's edges.   
The name follows from the fact that if we put a link on a mosaic together with its mosaic complement, the union of the two types of arcs on the interior of the mosaic $S$ will consist of entirely type V tiles, type IV tiles, and type IV tiles together with a dot (the last set comes from when the link has a type IV tile and the mosaic complement contributes a type 0 tile).  One set of arcs on these tiles belongs exclusively to the link and the complementary set of arcs belongs only to the mosaic complement.
%
% {\color{blue} Alternate version of name justification 2:
%Also for type III and type V tiles, we can think of their arcs as a (not necessarily proper) subset of arcs on a type V tile and the replacement tile from the mosaic complement consists of the complementary arc from the type V tile.  
%Type I and II tiles can be thought of as containing a subsets of the arcs on a type IV tile and the corresponding tile from the mosaic complement consists of the complementary arcs from the type IV tile.  
%Finally a type IV tile can be thought of as a subset of itself and the corresponding tile from the mosaic complement consists of the complementary arcs from the type IV tile (in this case, like the case of the type V tile there are no arcs in the complement), but we add in a dot in the center creating a type 0 tile so we can distinguish between replacing a type V tile and a type IV tile.}

Although the definition of the mosaic  complement is not quite unique since we could choose either type IV tile to replace each type I tile, there is an inverse function associated to the definition that is unique (reverse the orders in the definition above), so
the portion of the link in the mosaic contained in $S$ may be deduced from $C$.  
See Figure~\ref{fig:basicdual} for a picture of a knot mosaic together with its mosaic  complement (the outline of $S$ is also pictured).

In general, the mosaic  complement consists of loops, type 0 tiles, and arcs with both endpoints on
$\partial S$ which we call edges.  The term arc will be used to refer to a subset of a loop or an edge.  {\em The constant $|C|$} is defined to be
the total number of tiles in $C$ (excluding the blank type I tiles, of course).   {\bf Throughout the paper $R_{i,j}$ refers to the tile representing the knot and $T_{i,j}$  refers to the corresponding tile for the mosaic  complement.}  Although  $T_{i,j}$ is defined by looking at $R_{i,j}$, we will be focused on the mosaic  complement in 
most of our arguments so we will almost always refer directly to $T_{i,j}$.

\begin{figure}[htp]
\centering

 {\includegraphics[scale=.7]{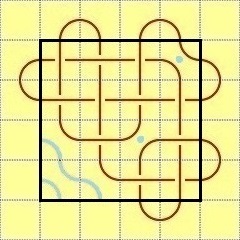}}
\caption{An example of a knot mosaic and its mosaic  complement.}
\label{fig:basicdual}
\end{figure}

We next define an ordered triple $T$, which is computed from the mosaic  complement.  Recall that $C$ is the set of all tiles (that are not blank) in the mosaic complement.
Let $C'$ be the set of all tiles that form the complement in $C$ of all of the type 0 tiles in $C$.  Note that the tiles of $C'$  together form a set of loops and properly embedded edges on the square disk $S$. 
Let $C''$ be the set of  all tiles $T_{i,j}$ in $C$ that are of type IV. 
Let $l = |C|$, $l' = |C'|$ and $l''=|C''|$.   We define the ordered triple $T= (l,l',l'')$. 
Notice that although the mosaic  complement is not unique for a given mosaic, the only choices were which of the two type IV tiles to pick and so any choice of mosaic  complement for a given mosaic will give the same ordered triple $T$.  In Figure~\ref{fig:basicdual}, for example, $T=(5,3,1)$ because the mosaic complement has 
5 nontrivial tiles, 3 of those tiles are not type 0, and 1 of those tiles is type IV.
In Figure~\ref{fig:m6} , $T=(3,0,0)$ because there are 3 nontrivial tiles in the mosaic complement,
but 0 of those tiles are not type 0 and, of course, 0 of those tiles are type IV. 

Of all possible ways to embed a specific knot $K$ on an $n \times n$ mosaic, let $M_n$ be a mosaic that minimizes the ordered triple $T$ lexicographically,
and say that in such a case that $T$ is {\em minimal}.  For example, if $K$ can be built with with a mosaic  complement yielding $T_1=(7,4,2)$ or with a mosaic  complement yielding $T_2 = (8,0,0)$ we pick the first embedding since $T_1 < T_2$ lexicographically.  

%Note that we show in Corollary~\ref{cor:noloops} that $T$ can always be chosen minimally with no loops in the mosaic  complement, so the loop restriction does not yield a higher value for $T$ than if it were left out.

%Another way to think of $C'$  is this: all type 0 tiles in the mosaic  complement correspond to smoothing one crossing from a mosaic that contained one more crossing. Then $C'$ is just the mosaic  complement which has none of these crossings smoothed.

\section{Saturation and mosaic  complements}

\label{sec:saturation}

 A mosaic is said to be \emph{saturated}  if every tile of $S$, the interior of the mosaic, is a crossing tile. In this case $C = \emptyset$.
Conversely, if a link mosaic has a nonempty mosaic  complement, it is not saturated.
Theorem~\ref{thm:newub} stated above shows that the even and odd knot mosaic boards have radically different properties regarding how ``nearly saturated'' they can be.

\begin{figure}[htp]
\centering
\includegraphics[scale=.6]{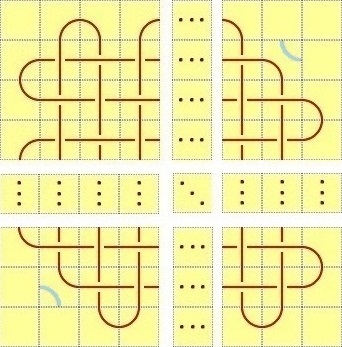}
\caption{An odd mosaic with $|C| = 2$.}
\label{fig:endlessgeneral}
\end{figure}

\begin{figure}[htp]
\centering
\includegraphics[scale=.7]{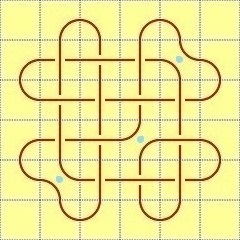}
\caption{There are three type 0 tiles in this mosaic  complement on $M_{6}$.  One could think of the 
mosaic as a saturated mosaic with three crossings smoothed reducing the number of components in the link from
4 to 1.}
\label{fig:m6}
\end{figure}

\begin{lem}
In a link mosaic, boundary tiles cannot be crossing tiles. Therefore all crossings of a link must fit on $S$, the interior of the mosaic.
\label{lem:cross}
\end{lem}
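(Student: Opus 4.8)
The plan is to argue directly from the definition of a \emph{suitably connected} link mosaic, focusing on where the arcs of a crossing tile meet that tile's edges. First I would record the elementary but essential observation that a type V tile (a crossing) carries an arc endpoint --- a \emph{connection point} --- on each of its four edges, since a crossing is formed by two strands, each joining a pair of opposite edges. This holds for both orientations Va and Vb, so no rotation of a crossing tile leaves any edge blank.

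Next I would invoke the requirement, built into the definition of a link mosaic, that the tiles be suitably connected so the total picture is a genuine projection of a link: every connection point on a tile edge must be matched by a connection point on the tile sharing that edge, so that all arcs close up with no free endpoints. The key step is then to contrast this with the geometry of a boundary tile. By definition a boundary tile intersects $\partial M_n$, hence has at least one edge lying along the outer boundary of the mosaic, and such an edge is shared with no other tile. If a boundary tile were a type V tile, it would carry a connection point on this boundary edge that could not be matched by anything outside the mosaic. This leaves a free arc endpoint on $\partial M_n$, contradicting the hypothesis that the mosaic is suitably connected; therefore no boundary tile can be a crossing tile.

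Finally, since every crossing of the link comes from a type V tile and no type V tile can be a boundary tile, all crossings must lie in the interior $S$; as $|S| = (n-2)^2$, this recovers the count underlying Theorem~\ref{thm:lowerbound}. I do not anticipate a genuine obstacle here: the proof is essentially a counting argument matching connection points to shared edges. The only points requiring care are stating precisely what ``suitably connected'' forbids, namely free endpoints on $\partial M_n$, and confirming that every orientation of a type V tile really does meet all four of its edges so that the argument cannot be dodged by reorienting the crossing.
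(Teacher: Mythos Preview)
Your proposal is correct and follows essentially the same approach as the paper: both argue that a crossing tile has a strand meeting every edge, while a boundary tile has an edge with no neighbor, so suitable connectedness would be violated. Your write-up is simply more detailed than the paper's two-sentence version.
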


\begin{proof}  This Lemma is easy to prove and well known.
By definition for a mosaic to be suitably connected, connection points cannot occur on the boundary of the mosaic.  Every edge of a crossing tile contains a connection point, so therefore boundary tiles cannot be crossing tiles (Tile $R_{1,2}$  in the mosaic on the left in  Fig.~\ref{fig:mosaicex}.
%, for example, already ensures that the mosaic cannot be suitably connected regardless of any other tiles on the mosaic). 
\end{proof}

\begin{figure}[htp]
\centering
\includegraphics[scale=.9]{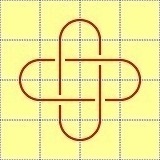}
\caption{A projection of Solomon's link on a saturated mosaic ($M_{4}$).}
\end{figure}

\begin{thm}
\label{thm:saturated}
A saturated mosaic cannot contain the projection of a knot that achieves its crossing number.
\end{thm}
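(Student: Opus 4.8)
The plan is to show that every knot projection carried by a saturated mosaic contains a \emph{reducible} crossing---one bounding an empty monogon, i.e.\ an R1 kink---which can be removed without changing the knot type. Since any diagram realizing the crossing number must be reduced (a nugatory crossing can be untwisted to produce a diagram of the same knot with one fewer crossing), exhibiting such a crossing shows the saturated projection uses strictly more than $c(K)$ crossings and hence does not achieve the crossing number. Because the mosaic is saturated, every one of the $(m-2)^2$ interior tiles of $S$ is a crossing tile (type V), and by the preceding lemma every boundary tile is either blank or an arc tile.

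First I would analyze the ring of boundary tiles lying immediately outside $S$. Consider the bottom interior row, the tiles $R_{1,2},\dots,R_{1,m-1}$ sitting directly below the lowest row of interior crossings. Each such tile receives a strand on its top edge from the crossing above it and has its bottom edge on $\partial M_n$, so suitable connection forces it to be an arc (type II) tile turning either toward its right edge or toward its left edge; it can be neither blank, straight, nor a double arc. A short local check shows that a tile turning toward its right edge forces its right neighbor to turn toward its left edge, and conversely, so the arc orientations along the bottom row \textbf{strictly alternate}. The same holds, after rotating the mosaic, along all four sides.

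Next I would classify how a side can close up. Two adjacent oppositely-turning tiles form a \emph{cup} joining the lower arms of two horizontally adjacent interior crossings; since those crossings already share the edge between them, such a cup is harmless. The alternative is a \emph{corner connection}: tracing outward from a corner crossing such as $R_{2,2}$, if $R_{1,2}$ turns toward the corner then $R_{1,1}$ and $R_{2,1}$ are forced to be specific arc tiles, and the two outward arms of $R_{2,2}$ join up around the empty corner region. This is exactly a monogon based at $R_{2,2}$, so that crossing is reducible; thus \emph{any} corner connection produces a nugatory crossing. To rule out the possibility that every side is entirely cups, I would finally invoke that $K$ is a \emph{knot}. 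Every cup and every interior edge joins two vertical (top--bottom) strand-ends or two horizontal (left--right) strand-ends, never mixing the two, so an all-cups saturated diagram splits into a nonempty vertical strand-family and a nonempty horizontal strand-family, giving at least two components---a link, not a knot. Hence a saturated \emph{knot} mosaic must contain a corner connection, hence a nugatory crossing, and so cannot realize $c(K)$.

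I expect the main obstacle to be the even case. For odd $m$ the interior rows have odd length, so they cannot pair up into cups and a corner connection is forced immediately; but for even $m$ an all-cups closure is combinatorially possible, so the argument genuinely needs the one-component hypothesis, and the crux is verifying that cups and interior edges never connect the vertical and horizontal strand families. Minor care is also needed in the small base cases $m\le 3$ (a single interior crossing is automatically nugatory) and in confirming that the disk cut off by a corner connection really is empty, which holds because each of the three corner tiles carries only a single arc.
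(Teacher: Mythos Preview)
Your proof is correct and follows essentially the same approach as the paper: analyze the forced alternating structure of the boundary arc tiles, observe that a corner connection yields a nugatory crossing, and in the even all-cups case invoke the one-component hypothesis to get a contradiction. Your argument that the vertical and horizontal strand families stay disjoint is in fact a cleaner justification of the multi-component claim than the paper's own proof, which simply asserts (with reference to a figure) that the all-cups configuration is a link of $n-2$ components.
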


\begin{proof}
Start by filling $S$ with type V tiles.  The proof will not depend on if we choose type Va or Vb so at this stage we have not lost generality of the argument no matter which type V tiles we choose.  Now we notice that since edges intersecting $\partial S$ can only connect to one of its two adjacent edges in $\partial S$
 there are only two choices of how to connect up the strands through the boundary tiles
to get a knot or link.
The vertical strand in $R_{2,3}$, for example, must either connect to the vertical strand in $R_{2,2}$ as it does in Figure~\ref{fig:evenlink}  or $R_{2,4}$
as in Figure~\ref{fig:westwestloops}.  In the first case this means tile $R_{1,3}$ is a type IId and $R_{1,2}$ is type IIc and in the second case tile $R_{1,3}$ is a type IIc and $R_{1,4}$ is a type IId, again both matching the examples in Figures~\ref{fig:evenlink} and~\ref{fig:westwestloops} respectively.  On a saturated board once this single choice has been made the rest of the choices on the outside are uniquely determined.

Suppose $K$ is a mosaic representation on $M_{n}$ which is saturated.  If $n$ is odd, both choices of how to connect up along the boundary tiles leads to a pair of nugatory crossings in opposite corners of the board like the crossing shown in Fig.~\ref{fig:loop}.  If $n$ is even, one choice will result in a link (not a knot) and the other will result in a knot with a nugatory crossing in all four corners.  
In each of the cases where we have a knot instead of a link type I Reidemeister moves will reduce the number of crossings.  Therefore a saturated mosaic cannot contain the projection of a knot that achieves its crossing number.
\end{proof}

\begin{figure}[htp]
\centering
 \reflectbox{\rotatebox[origin=c]{180}{\includegraphics[scale=.9]{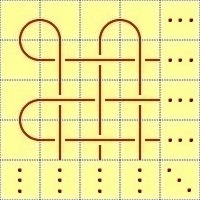}}}
\caption{A nugatory crossing in the corner.}

\label{fig:westwestloops}
\label{fig:loop}
\end{figure}

\begin{figure}[htp]
\centering
\includegraphics[scale=.6]{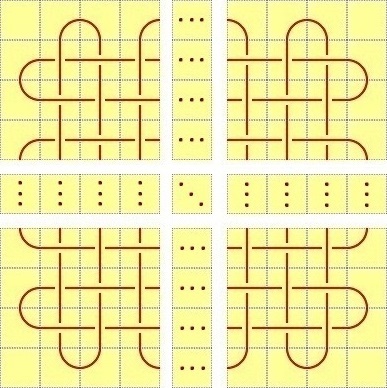}
\caption{A link of $n - 2$ components.}
\label{fig:evenlink}
\end{figure}

\begin{figure}[htp]
\centering
\includegraphics[scale=.6]{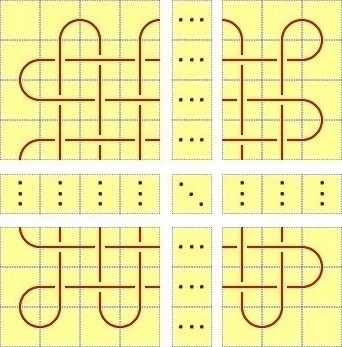}
\caption{Odd mosaic with corners easily removed by type I Reidemeister moves..}
\label{fig:oddcorners}
\end{figure}

We next consider mosaics with mosaic  complements consisting of a single tile, that is $|C| = 1$. Almost every knot will fail to achieve its crossing number on such mosaics, and the following lemma precedes a general theorem for mosaics with $|C| = 1$.

\begin{lem}
The trefoil knot has mosaic number 4.
\end{lem}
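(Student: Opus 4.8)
The plan is to establish the two inequalities $m \le 4$ and $m \ge 4$ separately, where $m$ denotes the mosaic number of the trefoil, and then combine them.

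For the upper bound, I would exhibit a knot $4$-mosaic whose underlying projection is the trefoil. The right-hand mosaic in Figure~\ref{fig:mosaicex} already does exactly this, so it suffices to verify that this suitably connected $4$-mosaic is a projection of the trefoil. This can be checked directly by tracing the single component and matching it with the standard trefoil diagram. Producing such a representation shows the trefoil is representable on a $4$-mosaic, hence $m \le 4$.

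For the lower bound, I would invoke the classical fact that the crossing number of the trefoil is $3$. This is standard: every knot admitting a diagram with fewer than three crossings is the unknot, whereas the trefoil is nontrivial (for instance, it is tricolorable). Now suppose, for contradiction, that the trefoil were representable on an $m$-mosaic with $m \le 3$. By Theorem~\ref{thm:lowerbound}, the crossing number of any knot represented on an $m$-mosaic is bounded above by $(m-2)^2$. For $m \le 3$ this forces $(m-2)^2 \le 1 < 3$, contradicting that the trefoil has crossing number $3$. Therefore $m \ge 4$, and combining with the upper bound gives $m = 4$.

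The only step demanding genuine care is confirming that the displayed $4$-mosaic is in fact the trefoil, rather than an unknotted diagram or a diagram whose crossings reduce; I expect this identification to be the main (though still routine) obstacle. It is settled by a direct trace of the diagram, noting in particular that the projection is a reduced three-crossing diagram and so cannot be the unknot.
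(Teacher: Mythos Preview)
Your proof is correct and follows essentially the same approach as the paper: exhibit the trefoil on the $4$-mosaic in Figure~\ref{fig:trefoil} for the upper bound, and for the lower bound use that a $3$-mosaic supports at most $(3-2)^2=1$ crossing, which cannot accommodate a knot of crossing number~$3$. The paper phrases the lower bound slightly more tersely (``a $3\times 3$ board can only support one crossing'') rather than explicitly invoking Theorem~\ref{thm:lowerbound}, but the content is identical.
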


\begin{proof}
The trefoil knot has crossing number 3. 
Since a $3\times 3$ board can only support one crossing, we must have at least a 
$4\times 4$ board to have a non-trivial knot.  
Indeed, the mosaic on the right in Figure~\ref{fig:trefoil} shows that the trefoil knot may be embedded on $M_{4}$ and achieve its crossing number of 3. Therefore the trefoil knot has mosaic number 4.
\end{proof}

%\begin{figure}[htp]
%\centering
%\includegraphics{Figures/trefoil.jpg}
%\caption{A projection of the trefoil knot on $M_{4}$.}
%\label{fig:trefoil}
%\end{figure}

\begin{thm}
\label{thm:oddub}
Given a knot K with crossing number $c$, suppose its mosaic number $m$ is odd. Then $c \leq (m - 2)^{2} - 2$.
\end{thm}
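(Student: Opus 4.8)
The plan is to convert the theorem into a statement about the interior $S$ and its dual. Since a boundary tile is never a crossing tile, every crossing of $K$ lies in $S$, and an interior tile is a crossing precisely when it is type V. Because the dual sends each type V tile to a blank type I tile and every other interior tile to a nonblank dual tile, a projection of $K$ on $M_m$ has exactly $(m-2)^2 - |D|$ crossings. Fixing any representation of $K$ on its minimal (odd) board $M_m$, I obtain a projection with $c' = (m-2)^2 - |D|$ crossings, and since crossing number is the minimum over all projections, $c \le c' = (m-2)^2 - |D|$. With this identity I would split on $|D|$. If $|D| \ge 2$ then $c \le (m-2)^2 - 2$ and there is nothing to do. If $|D| = 0$ the mosaic is saturated, so I invoke the odd case of Theorem~\ref{thm:saturated}: its proof forces two nugatory crossings sitting in a pair of opposite corners, as in Fig.~\ref{fig:oddcorners}. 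Removing both by Reidemeister I moves yields a projection of $K$ with $(m-2)^2 - 2$ crossings, giving the bound.

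The case $|D| = 1$ is the crux, and here I would exploit the structure of the dual directly. A single dual tile is either an edge or a type-0 tile. For $m \ge 5$, an edge must have both endpoints on $\partial S$, and a short check of the tile types shows that a straight or double-arc edge would be forced to place an endpoint in the interior of $S$, which is not allowed; hence a lone dual edge must be a corner-cutting type-II arc seated in a corner of $S$. In that configuration the corresponding corner of the mosaic carries no crossing, but the opposite corner still exhibits the all-crossing pattern of the saturated board, so the corner-loop analysis of Theorem~\ref{thm:saturated} applies verbatim there. Removing that one crossing by a Reidemeister I move lowers the count from $(m-2)^2 - 1$ to $(m-2)^2 - 2$.

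The remaining possibility when $|D| = 1$ is a single type-0 dual tile, that is, an otherwise saturated odd board with one crossing smoothed into a type-IV double arc (equivalently $|D'| = 0$, so the pre-smoothing link is saturated). In this sub-case I would argue exactly as in the saturated analysis that a corner crossing lying away from the smoothing survives as a nugatory crossing, and then remove it, again reaching $(m-2)^2 - 2$.

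The points I expect to be delicate, and which form the main obstacle, are the global connectivity checks hiding in the $|D| = 1$ case. First, because smoothing a crossing changes component parity, I must verify that the type-0 sub-case genuinely yields a single-component knot rather than collapsing into the multi-component link phenomenon that appears in the even saturated case. Second, I must confirm that perturbing or deleting the one defect tile never simultaneously opens up both opposite corner loops, so that at least one removable nugatory crossing is always guaranteed; tracking the single strand through the forced boundary tiles while one interior tile is altered is the heart of the matter. Once these connectivity verifications are in place, the theorem follows from the clean counting identity $c \le (m-2)^2 - |D|$ together with Theorem~\ref{thm:saturated}.
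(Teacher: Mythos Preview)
Your proposal is correct and follows essentially the same route as the paper: count crossings as $(m-2)^2 - |D|$, dispose of $|D| \geq 2$ trivially, invoke the saturated analysis for $|D| = 0$, and for $|D| = 1$ observe that the single non-crossing interior tile (necessarily a type~0 tile or a type~II tile in a corner of $S$) disturbs at most one of the two opposite nugatory corners, leaving one Reidemeister~I reduction available. Your connectivity worries are non-issues: the hypothesis already hands you a knot on $M_m$, so no component-count check is needed in the type~0 sub-case, and the corner nugatory crossings are determined locally by the corner tile of $S$ together with its boundary neighbors, so altering a single interior tile can affect at most the one corner it occupies.
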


\begin{proof}

We showed above that an odd mosaic represents a knot with crossing number at most $(n-2)^2 - 1$.
A mosaic with two interior tiles that are not crossing tiles (type V) will have $c \leq (m - 2)^{2} - 2$ so we are left to focus on the case of exactly one interior tile that is not a type V tile.
If the knot $K$ is achieved with only one interior tile that is not a crossing tile, then this means $|C|=1$.  This can only happen if $C$ is either a single type 0 tile or if $C$ is a single type II  tile in one of the corners of $S$.  If $C$ is not in one of the corners of $S$, then $M_n$ has two opposite corners with crossings that can be reduced by a type I Reidemeister move just as in a saturated odd board.  Placing the mosaic  complement in one of the corners can at most remove one of these nugatory crossings.  Thus even though this embedding of $K$ has $(n-2)^2-1$ crossings, $K$ does have an embedding with $(n-2)^2 - 2$ crossings or less, bounding the crossing number from above.
\end{proof}

Theorem \ref{thm:oddub} establishes the first part of the main theorem in this paper (Theorem \ref{thm:newub}).
Moreover, an odd mosaic that is saturated and with the type V crossings chosen to give an alternating knot achieves this bound, so the bound is sharp.
See Figure~\ref{fig:endlessgeneral} for the same knot simplified via two type I Reidemeister moves to show that the knot achieves its crossing number.

Once the even case is established, it will follow that the trefoil is the only knot which can be constructed on a mosaic whose mosaic  complement consists of a single tile.
For the rest of the paper we focus on the even mosaic case $M_{n}$ where we assume $n$ is even and $T$ is minimal with respect to all $n \times n$
knot mosaics giving knot $K$.

\section{Loops in the mosaic  complement}

We now construct an argument showing that we can assume $C$ contains no loops while keeping $T$ minimal.  

\begin{lemma}
Let $M_n$ be a mosaic for a knot $K$ for which $T$ is minimal and the number of loops in mosaic  complement $C$ is minimized over all such possible mosaics and for which $|C| \leq n-4$.  Let
$\{c_1,c_2,\dots,c_k\}$ be the set of loops in the mosaic  complement.
Then if the set of loops is not empty, at least one of the $c_i's$ contains a type II tile.

\label{lem:loop1}
\end{lemma}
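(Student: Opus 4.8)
The plan is to argue by contradiction using the fact that a loop, being a closed curve, must turn. In the dual the only tiles carrying arcs are type II (a single corner), type III (two corners), and type IV (a straight strand); the type $0$ dots are isolated and never lie on a loop. Since the tangent direction of a simple closed curve turns by a net $2\pi$, i.e.\ by four quarter-turns, and a type IV tile produces no turning, every loop must contain at least four corner-arcs, each housed in a type II or a type III tile. Assume, for contradiction, that none of the $c_i$ contains a type II tile; then for every loop all of its corners are realized by type III tiles.

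First I would make the location of a corner explicit by an extremal-tile argument. Given a loop $c_i$, let row $r$ be the topmost row it meets and let $T_{r,s}$ be its leftmost tile in that row. The loop's arc in $T_{r,s}$ cannot exit through the top edge (there is no loop tile in row $r+1$) nor through the left edge (there is no loop tile in column $s-1$ of row $r$), so it must join the bottom and right edges: a genuine $90^{\circ}$ turn. The analogous choices at the other three extremes produce four convex corners of $c_i$. By the contradiction hypothesis each of these corner tiles is type III, so at each one the second, unused arc lies on the outer side of the corner and runs into the exterior of $c_i$, where it must belong to some other strand --- another loop or an edge of $D$. Tracing these ``partner'' arcs, I expect to show that the type-III-only hypothesis forces the loops to occur in parallel, corner-sharing families, so that each such loop is accompanied by a companion curve sharing its type III tiles.

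The heart of the proof is then a local surgery on the original mosaic $M_n$ (the dual is only a bookkeeping device; all moves are performed on $M_n$ and read off in $D$) designed to contradict minimality. At a controlled type III corner I would alter the pre-image tile of $M_n$ so that the corresponding dual corner becomes a type II tile, re-routing or capping the severed partner arc into the room guaranteed by the hypothesis $|D|\le n-4$. Two things must be checked: that the altered $M_n$ is still a suitably connected mosaic representing the \emph{same} one-component knot $K$, and that the move strictly lowers the triple $T=(l,l',l'')$ lexicographically or, leaving $T$ fixed, strictly lowers the number of loops. Either conclusion contradicts the assumed minimality of $T$ and of the loop count, so some $c_i$ must contain a type II tile after all.

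The step I expect to be the main obstacle is this surgery. Replacing a type III tile (a smoothing) by a crossing, or re-routing its partner arc, is exactly the kind of saddle move that can change the number of link components, so the delicate point is to perform it while keeping a single component and without altering the knot type of $K$; showing that the partner arcs can always be absorbed into $c_i$ or discarded within the available space is where the three hypotheses --- $T$ minimal, loop number minimal, and $|D|\le n-4$ --- must be used together. Verifying that the result is a legitimate knot mosaic for $K$, and that it strictly improves the quantity being minimized, is the crux of the argument.
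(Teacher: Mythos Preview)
Your proposal has a systematic mislabeling that obscures the argument: in this paper type~III tiles are the \emph{straight} arcs (IIIa horizontal, IIIb vertical) and type~IV tiles are the \emph{double-corner} tiles (the two smoothings of a crossing). You have these reversed throughout. With the correct labeling, the contradiction hypothesis becomes ``every corner of every loop sits in a type~IV tile,'' and the ``partner arc'' you describe is the second arc of that type~IV tile.

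More importantly, you miss the one-line trick that makes the lemma almost trivial and renders your surgery program unnecessary. Recall from the definition of the dual that a type~IV tile in $D$ is the image of a type~I (blank) tile in $M_n$, and the choice between IVa and IVb is \emph{arbitrary}: either choice is a valid dual for the same mosaic $M_n$, hence for the same knot $K$, and both give the same ordered triple $T$. So if a loop $c_i$ has a type~IV corner, simply swap IVa$\leftrightarrow$IVb at that tile. This re-splices the two arcs in that tile, taking the connect sum of $c_i$ with whatever component of $D$ owned the partner arc, and therefore reduces the number of loops by one. Nothing about $M_n$, $K$, or $T$ has changed; only the (non-unique) dual has. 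This contradicts the hypothesis that the loop count was minimized among all $T$-minimal duals.

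Your proposed surgery---altering $M_n$ itself to convert a corner to type~II and then ``re-routing or capping the severed partner arc into the room guaranteed by $|D|\le n-4$''---is both unnecessary and, as you yourself flag, unverified: you never show that such a move preserves $K$ or decreases $T$, and changing tiles of $M_n$ genuinely risks changing the component count or the knot type. The paper avoids all of this because the freedom lives entirely in the dual, not in the mosaic.
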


\begin{proof}

Since each loop has at least 4 corners, the only way for a loop to avoid a type II tile is if each corner is part of a type IV tile.  
If none of the corners are type II then the loop has at least 4 type IV corners that meet other components of the mosaic  complement.  If one of these corners is type IVa replace it with one that is type IVb; if not do the opposite.  This swap yields the connect sum of the loop in the mosaic  complement with another component of the mosaic  complement, decreasing the number of loops in the mosaic  complement by one.  Since it does not change $T$ and since it still yields a mosaic  complement for $K$  (the type IV corners are chosen arbitrarily) we see that the original mosaic  complement did not meet the hypothesis of the lemma, a contradiction.  Thus the corners of each loop may be assumed to be type II, an even stronger conclusion than the one type II tile in the lemma. 
\end{proof}

\begin{lemma}
If some loop in the mosaic  complement contains a type II tile then $T=(l,l',l'')$ is not minimal.
\label{lem:loop2}
\end{lemma}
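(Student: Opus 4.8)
The plan is to contradict minimality by re-embedding $K$ so that the triple $T=(l,l',l'')$ drops lexicographically, and the key observation is what $l=|D|$ actually measures. The dual sends every crossing (type V) to a blank (type I), which is excluded from $|D|$, and sends every other interior tile to a counted dual tile; hence $l$ is exactly the number of interior tiles of $M_n$ that are \emph{not} crossings, i.e.\ $l=(n-2)^2-(\text{number of crossings})$. So decreasing $l$ is the same thing as producing an embedding of the \emph{same} knot $K$ with strictly more crossings, and any such embedding already makes $T$ smaller since $l$ is the first coordinate. The loop $c$ is built entirely from non-crossing tiles, so it is precisely this kind of slack, and the type II tile is where I intend to remove some of it.

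A type II tile in a loop is automatically a simple corner of that loop (type II is the single turning arc), with the loop going straight (type III) or turning (type II or IV) elsewhere. Fix such a corner; after a rotation or reflection of the mosaic I may assume the loop's interior lies locally to its lower right, so that the complementary arc of $K$ on the corresponding tile is a cap turning up and to the left, carrying the strand of $K$ around the outside of the region bounded by $c$. I would then perform a local isotopy of $K$, realized as a retiling of a bounded patch anchored at this corner, that pushes the capping strand inward: the corner tile (and, if needed, its two loop-neighbors) is converted into a crossing tile while the loop is rerouted to use strictly fewer non-crossing tiles. As a planar isotopy with an appropriate choice of over/under, the retiled mosaic still represents $K$; as a trade of at least one non-crossing tile for a crossing, it strictly lowers $l$, so $T$ was not minimal.

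Concretely the steps are: (i) record the forced original-tile types around the chosen corner (the cap, the adjacent type III edges of the loop, and the tiles just outside it); (ii) write down the explicit replacement tiles on a small patch that keeps every shared edge suitably connected and leaves every tile outside the patch untouched; (iii) verify the result is the dual of a knot and that this knot is $K$; and (iv) recount to confirm $l$ strictly decreases. The hard part will be (ii)--(iii): producing one local retiling that simultaneously stays on the $n\times n$ grid, matches the unchanged neighbors, provably reproduces $K$ rather than merely \emph{some} knot, and strictly reduces the non-crossing count. The delicate sub-cases are a minimal $2\times2$ loop, which encloses no interior tile and so offers nothing to push into, and a corner abutting $\partial S$ or another dual component, where the patch has little room; I would dispatch these either by selecting a different corner of $c$ guaranteed to have interior room or, as in Lemma~\ref{lem:loop1}, by a preliminary type IV flip that normalizes the neighboring corners before the main move.
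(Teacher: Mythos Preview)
Your reduction of the problem to ``produce an embedding of the same $K$ with strictly more crossings'' is correct and matches the paper's target, and you have also located the right tile: the type~II corner of the loop is exactly where the paper acts. The gap is in the mechanism. A local patch anchored at that corner cannot do the job. Converting the corner from type~II to type~V creates two new connection points on the edges facing the loop; those edges must now meet strands of the \emph{mosaic}, but on the loop side the adjacent mosaic tiles carry arcs of $K$ that run parallel to the loop and do not touch those edges (that is precisely what it means for the loop to sit in the dual there). So the neighbors must be retiled too, and the same obstruction propagates to their neighbors, all the way around~$c$. In particular your ``reroute the loop to use fewer non-crossing tiles'' is not available: the loop cannot remain in the dual at all once you cross it, and the $2\times 2$ bubble case you flag as delicate is not an edge case but the generic phenomenon in miniature.

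The paper resolves this by making the operation global rather than local: remove the entire loop $c$ from the dual and add it to the mosaic as a new link component, choosing every crossing of $c$ with $K$ so that $c$ goes under. Since $c$ has no self-crossings, this component is an unknot lying below $K$. Now replace the designated type~II tile by a type~V tile; this is a connect sum of $K$ with that unknot, hence still $K$, and $|D|$ has dropped (the corner tile alone went from a counted type~II dual tile to an uncounted type~I). Note this is not a planar isotopy---you genuinely add crossings---but it is an ambient isotopy in $3$-space because connect-summing with an unknot is trivial. Once you see the move globally, all of your anticipated case analysis (interior room, boundary adjacency, the bubble) evaporates: the construction uses only that $c$ is a simple closed curve in the dual with at least one type~II tile.
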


\begin{proof}
If there is such a loop, call it $c_1$.
 Any time $c_1$ crosses the knot, $K$, we may dictate that it goes under $K$. By virtue of the definition of the mosaic  complement, $c_1$ never crosses itself.  Now remove $c_1$ from the mosaic  complement and add it instead to the knot, 
replacing the knot mosaic with a link mosaic containing $K$ and an unknot.  Next place a type V tile into the mosaic (type V in the link, not the mosaic  complement) where the type II tile of $c_1$ had been. 
 Because $c_1$ was entirely below $K$, and $c_1$ had no crossings with itself, we have just taken the connect sum of $K$ with an unknot.  Thus we have a new mosaic representing $K$, but $|C|$ has dropped contradicting the minimality of the ordered triple
$T$ in the original mosaic.
\end{proof}

In the proof above we found an unknot in the mosaic  complement that contained a type II tile, swapped it out of the mosaic  complement and into the mosaic and changed the type II tile to a type V, yielding a connect sum of $K$ with an unknot resulting in a new embedding of $K$ and lowering $T$.  We will repeat this process multiple times in different contexts throughout the paper and we call the process {\em corner conversion}.

Lemmas~\ref{lem:loop1} and~\ref{lem:loop2} imply
\begin{cor}
Let $M_n$ be a mosaic for knot $K$ with $|C| \leq n-4$ and for which $T$ achieves its minimum over all such mosaics.  
We may then choose $M_n$ so that simultaneously $C$ contains no loops and $T$ is minimal.

\label{cor:noloops}

\end{cor}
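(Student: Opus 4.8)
The plan is to combine the two preceding lemmas through a nested minimization followed by a single contradiction. First I would observe that, since $T$ achieves its minimum over all mosaics for $K$ with $|D|\le n-4$, I may restrict attention to the (nonempty) collection of such mosaics realizing this minimal value of $T$. Within that collection I would then select a mosaic $M_n$ that also minimizes the number of loops in its dual $D$. By construction, $M_n$ satisfies all the hypotheses of Lemma~\ref{lem:loop1}: it represents $K$, its triple $T$ is minimal, the number of loops is minimized among $T$-minimal mosaics, and $|D|\le n-4$.

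Next I would argue by contradiction. Suppose $D$ contains at least one loop. Then Lemma~\ref{lem:loop1} guarantees that some loop $c_i$ contains a type II tile. But Lemma~\ref{lem:loop2} states that whenever a loop in the dual contains a type II tile, the triple $T$ is not minimal. This directly contradicts the choice of $M_n$ as a mosaic minimizing $T$. Hence the set of loops must be empty, and since $M_n$ was chosen with $T$ minimal, $D$ contains no loops while $T$ is simultaneously minimal, which is exactly the assertion of the corollary.

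The argument is almost purely logical once the two lemmas are in hand, so there is no serious computational obstacle; the only point requiring care is the compatibility of the two minimizations. Specifically, I want to be sure that minimizing the loop count is carried out only after fixing $T$ at its minimal value, so that the hypothesis of Lemma~\ref{lem:loop1} — that both $T$ is minimal and the loop number is minimized over $T$-minimal mosaics — is met verbatim. It is also worth confirming that the corner-conversion and connect-sum operations invoked inside the proof of Lemma~\ref{lem:loop2} preserve the constraint $|D|\le n-4$ (indeed they only decrease $|D|$), so that no mosaic produced in reaching the contradiction falls outside the class of mosaics under consideration.
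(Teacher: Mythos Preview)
Your argument is correct and is exactly the deduction the paper leaves implicit when it writes that Lemmas~\ref{lem:loop1} and~\ref{lem:loop2} imply the corollary: choose among $T$-minimal mosaics one with fewest loops, then apply the two lemmas to reach a contradiction if any loop remains. The extra care you take in verifying that Lemma~\ref{lem:loop2}'s reduction stays within the class $|D|\le n-4$ is a nice point, and indeed holds since the corner conversion strictly decreases $|D|$.
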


We now want to look at a particular class of loops.  These are the shortest possible loops: ones of length 4
coming from a combination of 4 tiles of types II and IV.  We call these short loops {\em bubbles}.  We see a bubble in each of the pictures in Figure~\ref{fig:percolate}.

The proof above showed that the mosaic  complement may be chosen to contain no loops if $T$ is minimal,
but it did not show that a mosaic  complement containing loops could not also be minimal.   Later 
we may start with a mosaic  complement that contains no loops and use moves that create bubbles, which we then want to argue is impossible, so we need a stronger result saying that if $T$ is minimal the mosaic  complement cannot contain bubbles.
In the argument it will be useful to have the following lemma that gives us some flexibility in where a bubble might be positioned.

\begin{lemma}
Given a knot mosaic $M_n$ for knot $K$ with mosaic  complement $C$ and ordered triple $T$, if $M_n$ contains any 
$2 \times n$ or $n \times 2$ subset of tiles, $n \geq 2$, that 
consists of exclusively type IV tiles in the complement, then we may pick 
any $2 \times 2$ subset of these tiles and form 
a (possibly) new mosaic  complement for $M_n$ and $K$ in which there is a bubble
in the $2 \times 2$ subset so that $T$ is unchanged for the new mosaic  complement.  
\label{lem:percolate}
\end{lemma}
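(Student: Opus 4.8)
The plan is to realize the bubble \emph{purely by re-choosing orientations of type IV tiles}, changing nothing else about the dual. The key structural observation is that the type IV tiles filling the strip arise exactly from the blank type I tiles of $M_n$, and the only freedom in the dual over such a region is the independent choice of IVa versus IVb for each of these tiles. The crucial geometric fact I would isolate first is that \emph{both} IVa and IVb meet each of the four edges of a tile at its midpoint: each type IV tile carries two arcs, and together they place exactly one endpoint on each edge regardless of the a/b choice. Consequently the set of connection points a type IV tile presents to its four neighbors is identical for the two orientations, so re-orienting any collection of type IV tiles can never violate suitable connectivity, either inside the strip or along its border.

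With this in hand, the construction is local. For the chosen $2\times 2$ sub-block I would orient each of its four tiles so that one of its two arcs occupies the corner adjacent to the common central vertex of the block, and the other arc occupies the opposite corner. A direct check across the four shared edges shows that the four central corner arcs join consecutively into a single closed curve of length four contained in exactly these four tiles, which is precisely a bubble. The remaining arc of each of the four tiles occupies the outward corner and exits through the two outward edges; because these exits sit at the same midpoints as before, they splice into the surrounding dual exactly as the original arcs did. Hence everything outside the $2\times 2$ block is untouched and the whole configuration remains a suitably connected dual. The $n\times 2$ case follows by rotating the argument $90^\circ$.

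It remains to confirm that $K$ and $T$ are preserved. Since the inverse of the dual operation sends every type IV tile back to a blank type I tile irrespective of its a/b orientation, the reconstructed mosaic is precisely $M_n$, so the new dual still represents $K$. Moreover only orientations, never tile types, have changed: all four bubble tiles remain type IV, so $l$, $l'$, and $l''$ are unchanged and therefore so is $T=(l,l',l'')$. This is exactly why the bubble must be built from type IV tiles rather than type II tiles, since type II tiles would pull back under the inverse dual to genuine arcs in $M_n$ and would alter both the mosaic and the triple. The only genuine content to verify is the two local facts above, namely the closing-up of the four central corner arcs into a length-four loop and the midpoint-preservation of the outward arcs; I expect the modest case analysis confirming that these four corner arcs actually form a closed loop (rather than pairing off into two shorter arcs) to be the main, though routine, obstacle, after which the global conclusion is immediate.
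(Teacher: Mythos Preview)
Your proof is correct and follows the same approach as the paper: re-orient the four type IV tiles in the chosen $2\times 2$ block so that their inner arcs close up into a bubble, observing that this changes neither $K$ (since both IVa and IVb pull back to a blank type I tile under the inverse dual) nor $T$ (since only orientations, not tile types, are altered). The paper's own proof is essentially a two-sentence assertion of exactly these facts, so your version is simply a more carefully justified rendering of the same idea.
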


\begin{proof}
The proof is easy.  Pick each of the four tiles to be IVa or IVb so that they have a bubble within them.  Leave the other type IV tiles unchanged.  Since swapping type IV tiles in the mosaic  complement doesn't affect $K$ or $T$, the new mosaic  complement has shifted the bubble to the desired location and satisfies our requirements on $K$ and $T$. 
\end{proof}

\begin{figure}[htp]
\centering
\includegraphics[scale=.9]{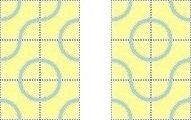}
\caption{A bubble percolates up through a $3 \times 2$ set of type IV tiles.}
\label{fig:percolate}
\end{figure}

This process allows us to shift the location of an existing bubble through nearby type IV tiles. We call this process of shifting a bubble to a new location {\it percolation}. A $3 \times 2$ example is shown in Figure~\ref{fig:percolate}.  

\begin{theorem}
Let $M_n$ be a mosaic for knot $K$ for which $T$ is minimal.   $C$ cannot contain a bubble.
\label{thm:burst}
\end{theorem}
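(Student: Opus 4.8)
The plan is to argue by contradiction: suppose $T$ is minimal yet $D$ contains a bubble $b$, which by definition is a loop occupying exactly four tiles in some $2\times 2$ block whose four corners are each of type II or type IV. First I would split on the types of these four corners. Since a bubble is in particular a loop in $D$, if even one of its corners is a type II tile then Lemma~\ref{lem:loop2} applies verbatim: the bubble is a loop containing a type II tile, so corner conversion (push the loop out of $D$ and into the knot, install a type V tile where the type II tile sat, obtaining a connect sum of $K$ with an unknot) produces a new mosaic for $K$ with strictly smaller $|D|$, hence strictly smaller $T$. This contradicts minimality, so this subcase cannot occur.

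The real work is the remaining case, in which all four corners of $b$ are type IV. I would first translate the configuration to both sides of the duality: on the knot mosaic $M_n$ the four tiles are blank (type I), while in $D$ the bubble is the central diamond and each type IV corner carries a second arc joining its two outward edges, so the diamond is flanked by four short arcs that leave the block. The reason corner conversion is unavailable here is worth recording: pushing the diamond into the knot would only turn blank tiles into single type II arcs rather than crossings, and since $K$ has no strand adjacent to the block, the diamond would become a split unknot component rather than a connect summand. A different mechanism is therefore needed to lower $T$.

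For the all-type-IV case the plan is to exploit the one move that is simultaneously $K$-neutral and $T$-neutral, namely the IVa$\leftrightarrow$IVb swap from the proof of Lemma~\ref{lem:loop1}: swapping a single type IV corner splices the bubble into the dual component met by that corner's escaping arc, merging the two and decreasing the number of loops by one without disturbing $T$ or $K$. Combined with percolation (Lemma~\ref{lem:percolate}), which lets me slide the bubble through any adjacent block of type IV tiles, I would aim to relocate and then merge the bubble so that the component it joins is a loop carrying a type II tile; at that point Lemma~\ref{lem:loop2} finishes the argument exactly as in the first case. In effect I am trying to force the configuration into the shape already handled by Lemma~\ref{lem:loop1} and Corollary~\ref{cor:noloops}, but without access to their hypothesis $|D|\le n-4$.

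The hard part will be precisely this all-type-IV case. Because the swap and percolation moves leave $T$ unchanged, none of them alone can contradict minimality; the crux is to show that after finitely many such moves one is forced to expose a type II tile lying on a loop (or an outright $T$-reduction), and to rule out the degenerate possibility that the escaping arcs route the bubble only into open edges of $D$ that never acquire a type II corner. I expect this to require a global argument using that $K$ is a single component, together with the behavior of the dual edges near $\partial S$, to track how the four escaping arcs must eventually close up, rather than any purely local tile count.
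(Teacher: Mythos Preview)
Your first case (a type~II corner on the bubble) matches the paper exactly. The divergence is in the all-type-IV case, and there your plan has a real gap that you yourself flag: merging the bubble into neighbouring dual components via IVa/IVb swaps and hoping eventually to land on a loop carrying a type~II tile need not succeed. The components you splice into may all be edges (arcs with endpoints on $\partial S$), and a bubble spliced into an edge produces an edge, not a loop, so Lemma~\ref{lem:loop2} no longer applies. Since Theorem~\ref{thm:burst} is invoked precisely in settings where loops have already been eliminated (cf.\ Lemma~\ref{lem:norelease} and the later reduction moves), one should expect the surrounding dual to consist only of edges, and your proposed endgame never materialises. Your closing paragraph essentially concedes this: the ``global argument'' you anticipate needing is the actual content of the theorem.

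The paper sidesteps the difficulty by abandoning the search for a type~II tile inside the dual and instead bringing the bubble into contact with the \emph{knot}. If $K$ meets some row or column of $S$ occupied by the bubble, percolate the bubble (Lemma~\ref{lem:percolate}) so that it sits immediately below an arc of $K$; the dual tile just above the bubble is then forced to be of type~IIa, IIb, or IIIb, and in each of the resulting six local configurations a single explicit move absorbs the bubble into $K$ as a connect-summed unknot while strictly lowering $T$. If $K$ meets neither row nor column of the bubble, those two columns of $S$ are blank on the knot side, hence entirely type~IV on the dual side, so the bubble can be percolated vertically until it reaches a row that does meet $K$ (such a row exists because $K$ has at least one crossing), reducing to the previous case. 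The key idea you are missing is that $K$ is always available as a target for the connect sum, whereas a dual loop with a type~II corner need not be.
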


\begin{proof}
If we ever have a bubble which contains a type II tile, then we can 
do a corner conversion as we did in Lemma~\ref{lem:loop2},
 yielding a new embedding of $K$ but lowering $T$.  This contradicts the fact that $T$ was minimal.  Thus we proceed with the argument under the assumption that the bubble is entirely contained in type IV tiles.

Next we show the mosaic  complement is not minimal if the knot intersects either a row or a column of $S$ containing the bubble.  
In this case, the ability to rotate the mosaic assures us that we may assume that the intersection is in a column above the bubble. If there are any type IV tiles below the knot in those columns, but above the bubble we use Lemma~\ref{lem:percolate} to shift the bubble up to the four tiles directly below the knot.

Explicitly, if $K$ intersects $T_{i+1,s} \cup T_{i+1,s+1}$ we let the bubble be contained in tiles $T_{i-1,s}$, $T_{i,s}$, $T_{i-1,s+1}$, and $T_{i,s+1}$.  Without loss of generality let $K$ intersect $T_{i+1,s}$ (and possibly $T_{i+1,s+1}$, too).
Because it is directly above a type IV tile, but it contains part of the knot,
$T_{i+1,s}$ is either type IIb, IIa, or IIIb (the knot cannot intersect the bottom edge of the tile). $T_{i+1,s+1}$
is also above a type IV tile and must pair with $T_{i+1,s}$.
This means $T_{i + 1,s + 1}$ must be a type IIa tile or a type IV tile if $T_{i+1,s}$ is type IIb, and $T_{i+1,s+1}$ must be IIb tile or a type IIIb
if $T_{i+1,s}$ is IIa or IIIb.

We show moves in Figure~\ref{fig:burst} for six possible combinations that allow us to connect sum the bubble with the knot and reduce $T$ contradicting minimality -- in the case of a type IV in $T_{i+1,s+1}$ we show only  type IVb case since the IVa case is nearly identical.

\begin{figure}[htp]
\centering
\includegraphics[scale=.8]{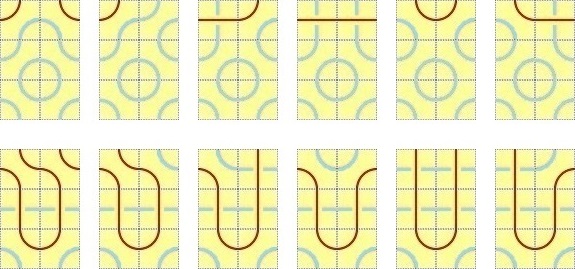}
\caption{A bubble can never appear in the mosaic  complement when $T$ is minimal.  Here we see that if a tile above the bubble contains an arc of $K$, there is always an embedding of $K$ that decreases $T$ and `bursts' the bubble.}
\label{fig:burst}
\end{figure}

Finally we are left with the case where neither the columns nor the rows of $S$ containing the bubble intersect the knot.  Thus they are exclusively full of type IV tiles in the complement.  Since $K$ exists and in any interesting case has at least one crossing, we know that some row of $S$ intersects $K$.
(Of course the unknot fits on a $2 \times 2$ board with $S = \emptyset$ so we are only interested in knots with positive crossing number.)

By Lemma~\ref{lem:percolate} we can percolate the bubble within the columns containing it to make it intersect the row that already contains part of $K$.
Now as before we have not changed $K$ or $T$.  We have, however, reduced to the previous case, which shows $T$ can be reduced without changing $K$, contradicting minimality.
\end{proof}

\section{Edges in the mosaic  complement}

Since we now know that we can get rid of loops in a mosaic  complement without increasing $T$ we turn our attention to edges.

\begin{lemma}
If $|C| \leq n-4$ then no edge in $C$ runs from one side $S$ to the opposite side.
\end{lemma}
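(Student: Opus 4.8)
The plan is to argue by contradiction via a simple connectivity/pigeonhole count on the columns (equivalently rows) of $S$. Suppose, for contradiction, that $D$ contains an edge $e$ running from one side of $S$ to the opposite side. Since the mosaic may be rotated by $90^\circ$ without changing $|D|$ (the same symmetry invoked in the proof of Theorem~\ref{thm:burst}), I may assume without loss of generality that $e$ joins a point on the left side of $\partial S$ to a point on the right side of $\partial S$.

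The key observation I would establish is that such an edge must meet every one of the $n-2$ columns of $S$. Recall that $S$ is an $(n-2)\times(n-2)$ square disk; set up coordinates so that the left side is $\{x=0\}$, the right side is $\{x=n-2\}$, and column $j$ occupies the horizontal strip $x\in[j-1,j]$. Parametrizing $e$ as a continuous path from its left endpoint to its right endpoint and composing with projection onto the $x$-axis, the intermediate value theorem shows that the $x$-coordinates realized along $e$ fill the whole interval $[0,n-2]$. In particular, for each $j\in\{1,\dots,n-2\}$ there is a point of $e$ whose $x$-coordinate lies strictly inside $(j-1,j)$, hence a point of $e$ lying in some tile of column $j$.

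Next I would convert this into a lower bound on $|D|$. Any tile containing a piece of $e$ is, by the definition of the dual, a non-blank tile of type II, III, or IV, and so is counted in $|D|$. Since the point produced for column $j$ lies in a tile of column $j$, each of the $n-2$ columns contributes at least one such tile, and tiles in distinct columns are distinct. Therefore $e$ occupies at least $n-2$ tiles, giving $|D|\ge n-2$. This contradicts the hypothesis $|D|\le n-4$, so no edge can run from one side of $S$ to the opposite side.

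I do not anticipate a serious obstacle here, as the statement is really a counting fact; the two points needing care are (i) verifying that a point of $e$ interior to a column genuinely forces a non-blank, $|D|$-counted tile there, which is immediate since arcs in the dual are drawn only on type II, III, and IV tiles, and (ii) making the ``meets every column'' claim rigorous, for which the projection/IVT argument is the cleanest route. Note that the argument in fact yields the stronger conclusion already under $|D|\le n-3$; the slack down to $n-4$ simply matches the uniform hypothesis $|D|\le n-4$ used in the neighboring lemmas.
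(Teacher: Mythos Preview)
Your argument is correct and is essentially the same as the paper's: an edge joining opposite sides of $S$ must pass through at least $n-2$ tiles, forcing $|D|\ge n-2$ and contradicting $|D|\le n-4$. The paper states this as a one-line observation (``such an edge must be of length at least $n-2$''), while you have spelled out the column-count via projection and the intermediate value theorem; the content is identical.
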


\begin{proof}
The proof is trivial since such an edge must be of length at least $n-2$.
\end{proof}

This implies that if $e$ is an edge of the mosaic  complement, then the endpoints will have to either be on adjacent sides, as in 
edges $f_1, f_2, f_3$ and $f_4 $  in Figure~\ref{fig:outermost}, or on the same side of $S$, i.e. starting and ending in the exact same row or column as in edges $e_1, e_2$ and $e_3$ in the same figure.
If the endpoints are on the same side as each other we call it an {\em XX-edge}
and if adjacent sides we call it an {\em XY-edge}.

Both XX and XY-edges cut $S$ into two disks.  
The smaller side is considered the outside of the edge.
In topology we may not have a metric so we often avoid talking about the smaller part of a disk, but a mosaic as an $n \times n$ subset of the plane has a natural metric on it so we are free to use the term.

Because the edges in the mosaic  complement are relatively short, if $e$ is an XX-edge then the boundary of the outside (smaller) disk consists of $e$ together with part of one side of $S$.
Likewise if $e$ is an XY-edge the outside consists of $e$ together with parts of two sides of $S$.
Thus for any arc $e$ we have a notion of \emph{outside}.
An edge $e \subset C'$ is called \emph{outermost} in $C'$ if there are no edges outside of $e$ on $S$ in $C'$.  

Note that if $C$ contains no loops -- which Corollary~\ref{cor:noloops} allows us to assume -- and $C' \neq \emptyset$, so there is at least one edge, then there must be an outermost edge $e$.  
Also note that our definition is not quite the same as the traditional definition of outermost arcs on disks in topology. 
If $e$ is outermost in our context it is outermost in the traditional definition, but not every traditionally outermost arc is outermost in our definition because it might be outermost, but on the wrong side (the side of its larger disk).  
An edge can still be outermost even if there is a type 0 tile of $C$ outside of it.

\section{Reduction moves}

We establish a set of moves that when applied to the arcs of the mosaic  complement will reduce the ordered triple $T$ without changing the isotopy class of the knot.  One primary use of the moves is to lower an arc $a$ of the mosaic  complement that represents a local maximum (possibly after rotating the mosaic).
This will eventually lead to the conclusion first that no such moves can be made to the edges of $C$, and then after a further argument that $C$ contains no edges at all if $T$ is minimal.  We define the moves starting with the more elementary moves.

\subsection{The type IV moves: bubble release and XX-through-XY moves}

In the proof of Lemma~\ref{lem:loop1} we swapped type IV tiles to reduce the number of loops in a mosaic  complement by taking the connect sum of a loop with another part of the mosaic  complement.  We now consider the inverse operation on the mosaic  complement when it would create a bubble.  We define a bubble release move when we swap a type IVa tile for IVb or vice versa to yield a bubble without altering $K$ or $T$.  Such a move is pictured in Figure~\ref{fig:bubblerelease}.

\begin{figure}[htp]
\centering
\includegraphics[scale=.9]{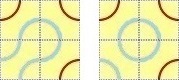}
\caption{Neither the embedding of the knot nor $T$ are altered when we break a bubble off of an arc 
of the mosaic  complement using the bubble release move. Note that the move is identical if any of the type II mosaic  complement tiles are swapped for type IV tiles.}
\label{fig:bubblerelease}
\end{figure}

We know, however, by Theorem~\ref{thm:burst} that a minimal 
mosaic  complement can never contain a bubble and $T$ is unchanged by a bubble release move so we see immediately the following lemma. 

\begin{lemma}
If $T$ is minimal then $C$ cannot contain tiles on which we can perform a bubble release move.
\label{lem:norelease}
\end{lemma}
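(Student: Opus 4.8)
The plan is to argue by contradiction, leaning entirely on Theorem~\ref{thm:burst}. First I would suppose that $T$ is minimal yet $D$ nonetheless contains a type IV tile on which a bubble release move can be performed. By the very definition of the move (see Figure~\ref{fig:bubblerelease}), swapping this IVa tile for an IVb tile (or vice versa) produces a new dual $\tilde{D}$ for the same knot $K$ that creates a bubble while leaving both the knot projection and the ordered triple $T$ completely unchanged.

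The key observation is then that $\tilde{D}$ is itself a dual for $K$ whose ordered triple equals the minimal value of $T$, so $\tilde{D}$ also realizes the minimum over all duals of $K$. Theorem~\ref{thm:burst} therefore applies verbatim to the mosaic underlying $\tilde{D}$: since its ordered triple is minimal, $\tilde{D}$ cannot contain a bubble. But we constructed $\tilde{D}$ precisely so as to contain one, which is the desired contradiction. Hence no tile of $D$ admits a bubble release move when $T$ is minimal, which is exactly the statement of the lemma.

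The only point requiring care is confirming that the post-move configuration still qualifies as a minimal dual for $K$, so that the hypotheses of Theorem~\ref{thm:burst} genuinely transfer to $\tilde{D}$; this is guaranteed because swapping a single type IV tile between its two rotations alters only the local connection pattern of the dual and affects neither $K$ nor $T$. I do not expect a substantive obstacle here: once one records that the bubble release move preserves minimality, the lemma is an immediate corollary of Theorem~\ref{thm:burst}, and the entire argument reduces to this short contradiction.
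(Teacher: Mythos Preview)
Your proposal is correct and follows essentially the same route as the paper: the paper simply notes that since a bubble release move leaves $T$ unchanged, performing it would yield a minimal dual containing a bubble, contradicting Theorem~\ref{thm:burst}. You have spelled out this contradiction in more detail, but the argument is identical.
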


The other type IV move is the XX-through-XY move.  Let $C$ contain $e_1$ an XY-edge that has $e_2$ an XX-edge outside of it such that the two edges share a type IV tile.  Switching the tile from IVa to IVb or vice versa will, of course, have no effect on $T$ or $K$, but will replace 
$e_1$ and $e_2$ with a new XY-edge and a new XX-edge.  Call the XY-edge $e_1'$ and the XX-edge  $e_2'$.
The move reduces the overall number of XX-edges outside of XY-edges.  In particular at the very least $e_2'$ is not outside of $e_1'$ and $e_1'$ has fewer XX-edges outside of it than $e_1$ did.

Iterating this process will eventually terminate since the number of type IV tiles is bounded by $|C|$.

We define a knot mosaic together with a mosaic  complement $C$ and associated ordered triple $T$ to be {\em a minimal embedding} for a 
knot $K$ if $T$ is minimal, $C$ contains no loops and in $C$ no XX-through-XY moves are possible.
Corollary~\ref{cor:noloops} together with the process we have just described assures that every knot
$K$ that has a knot mosaic
with $|C| \leq n-4$ has a minimal embedding.  
We call the mosaic  complement in a minimal embedding a {\em minimal mosaic  complement}.

\subsection{Corner-corner moves}

We describe this move in terms of an arc that acts as a local maximum for an edge and is moved downward.  By symmetry,  we can rotate the mosaic any multiple of 90 degrees or reflect along a horizontal or vertical line so the move is equally valid if the arc is a minimum and is moved upward, or one that is concave right and is moved to the right or concave left and is moved left.

\bigskip

{\bf Corner-corner move:}  Let $e$ be an edge in $C$ that intersects 
row $i$ in an arc $a$ that represents a local maximum for $e$.  A local maximum must run directly across row $i, i>2$, from a type IIb
(or IVb) tile
 in  column $s$ to a type IIa (or IVa) tile in column $t$ with $s <t$ as in Figure~\ref{fig:cc}.
We want to reduce the ordered triple $T=(l,l',l'')$ by moving part of the knot up across $a$ and shorten $a$ by moving it down.  Figure~\ref{fig:ccmb} shows the basic move.

\begin{figure}[htp]
\centering
\centerline{
\begin{tikzpicture}
  \node at (0,0) {\includegraphics[scale=.6]{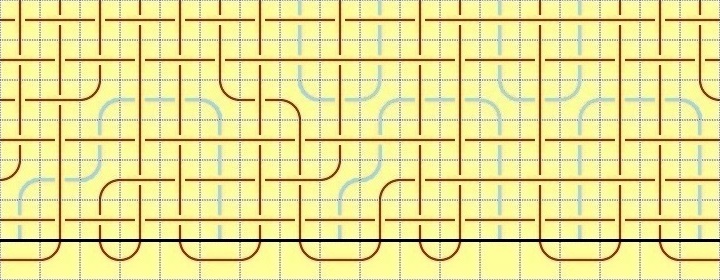}};
  \node at (-4.1,.6) {*};
  \node at (-2.25,.6) {*};
  \node at (.35,.6) {*};
  \node at (2.2,.6) {*};
  \node at (3.53,.6) {*}; 
   \node at (5.38,.6) {*}; 
\end{tikzpicture}
}
\caption{Each * denotes one of four types of corners possible in a 
corner-corner arc (two up to reflective symmetry).}
\label{fig:cc}
\end{figure}

\begin{figure}[htp]
\centering
\centerline{\includegraphics[scale=.7]{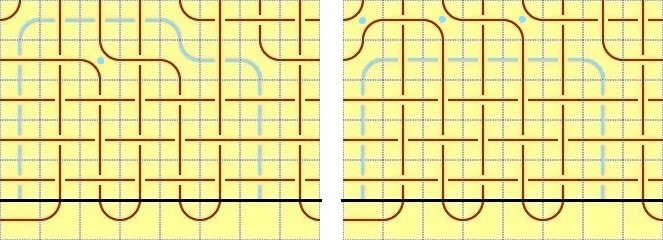}}
\caption{We see a basic corner-corner move.}
\label{fig:ccmb}
\end{figure}

We pay close attention to any portion of the mosaic  complement in tiles $T_{i-1,w}$ with $s \leq w \leq t$ (row $i-1$ directly under $a$). 
Since each of the tiles of $a$ of the form $T_{i,w}$ with $s<w<t$ consists exclusively of type IIIa tiles,  clearly those tiles of the form $T_{i-1,w}$
 cannot ever be type IIc, IId
IIIb, IV or V. 

Certainly $T_{i-1,w}$ can be a Type 0 tile as shown, together with the corresponding corner-corner move in, Figure~\ref{fig:ccmb}. 
On the other hand, if there is a tile in the mosaic  complement $T_{i-1,w}$ with $s<w<t$ 
 that is type IIa, IIb, or  IIIa 
then the corner-corner move is undefined on $a$. Such examples are seen in the nested arcs in Figure~\ref{fig:outermost}.  Another obstruction to the definition we can encounter is that 
if $T_{i-1,t}$ is type IId, IVa, or IVb.  Symmetrically it is also undefined if the mosaic  complement tile $T_{i-1,s}$ is type IIc, IVa, or IVb.
We see arcs of this form in Figure~\ref{fig:mosaic  complementblock2}.
 We will never need to use the corner-corner move in any of the undefined contexts, so the lack of definition here will not be a problem.

\begin{figure}[htp]
\centering
\begin{tikzpicture}
  \node at (0,0) {\includegraphics[scale=.8]{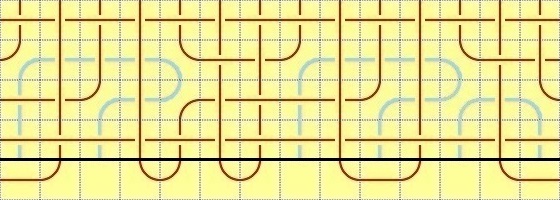}};
  \node at (-3.2,1.1) {$a_{1}$};
  \node at (-1.85,.4) {$a_{1}'$};
  \node at (2.7,1.1) {$a_{2}$};
  \node at (4.1,.4) {$a_{2}'$};
\end{tikzpicture}
\caption{The corner-corner move is not defined on
the arcs $a_1$ and $a_2$ at the top of the edges because of the bottom tile in $a_1'$ and $a_2'$
blocking the move,
but this is not a problem because it is defined on the two-tile arcs $a_1'$ and $a_2'$.}
\label{fig:mosaic  complementblock2}
\end{figure}

We now focus on the definition of the move in the
situations where it can be applied.  The move at its core just takes an arc $a$ that is a local maximum for
the mosaic  complement and pushes it down one row when there is nothing from the mosaic  complement already below it to block it.  
The exact prescription is given in two parts.  For each $w$, $s<w<t$ we switch 
$T_{i-1,w}$ with $T_{i,w}$.  This tells us how we apply the move to tiles that are between the corners 
of the arc, but not in the corners themselves.
We now specify the move on the two corner tiles and the two tiles directly below them ($T_{i,s}$, $T_{i-1,s}$, $T_{i,t}$ and $T_{i-1,t}$).  The corner tiles 
$T_{i,s}$ and $T_{i,t}$
are either type II tiles or type IV.  When the move is defined tile $T_{i-1,s}$ must be type IIIb tile or type IIc.  On the other corner, 
$T_{i-1,t}$ is either type IIIb or IId. 
As mentioned earlier, the move is not defined if either of the tiles below the corners are type IV; we address this situation later.

The swap for a typical situation is pictured in Figure~\ref{fig:ccmb}.  If the tile $T_{i,s}$ or $T_{i,t}$ is 
type II we replace it with a type 0 tile.  If it is type IVa it is replaced with a type IIc tile.  A type IVb is replaced with type IId.  If the tile $T_{i-1,s}$ is type IIIb it is replaced by a 
type IIb tile. If it is type IId, it is replaced by IIIa.
If $T_{i-1,t}$ is type IIIb then it is replaced by type IIa.  If it is IIc, it is replaced by type IIIa.

\begin{lemma}
A corner-corner move causes a planar isotopy of $K$ and reduces $T$. Hence there cannot be an arc on which a corner-corner move can be applied in a mosaic that minimizes $T$.
\label{lem:nocc}
\end{lemma}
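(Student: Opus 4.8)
The plan is to verify the two assertions in the statement — that the corner-corner move is realized by a planar isotopy of $K$, and that it strictly lowers $T$ — and then deduce the ``Hence'' by minimality. Both assertions are local: the prescribed retiling only alters the $2\times(t-s+1)$ block consisting of rows $i-1,i$ in columns $s$ through $t$, and leaves the rest of the dual (hence of $K$) untouched. So I would first translate the retiling back into the knot via the inverse dual map — recall that a dual type $0$ tile corresponds to a type IV tile of $K$, a blank dual tile to a crossing of $K$, a dual type III to the perpendicular type III of $K$, and each dual type II corner to its $180^{\circ}$ rotation in $K$ — and then show that the two tangles occupying this block, before and after the move, are ambient isotopic rel their boundary.

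For the planar-isotopy step I would argue that both tangles meet the boundary of the block in the same eight points joined by the same pairing, and carry exactly the same crossings. Concretely, the cap $a$ and the row directly beneath it encode in $K$ a horizontal strand running through the crossings and smoothings sitting in row $i-1$ (the tiles $T_{i-1,w}$ that are type $0$ or blank), together with the vertical strands and corner arcs coming from the cap itself; applying the move slides that horizontal strand, carrying each such crossing or smoothing, up into row $i$, while the $\cap$-shaped arcs near the two corners descend one row. The basic instance is Figure~\ref{fig:ccmb}. I would then check, in each of the corner configurations of Figure~\ref{fig:cc} (four, or two up to reflection) and for either possibility of each tile beneath the cap, that the boundary pairing is unchanged and that every crossing is merely translated one row with its over/under datum intact. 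Since nothing is created, destroyed, or passed through, the retiling is an ambient isotopy of the plane fixing the block boundary, i.e.\ a planar isotopy; in particular the retiled mosaic is still a suitably connected knot mosaic for the same knot $K$.

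For the reduction of $T=(l,l',l'')$ I would simply count tile types. The swaps $T_{i-1,w}\leftrightarrow T_{i,w}$ for $s<w<t$ merely permute the two rows, and all four corner-and-below replacements send non-blank tiles to non-blank tiles, so $l=|D|$ is unchanged. Each of the two top corner tiles $T_{i,s}$, $T_{i,t}$ behaves as follows: if it is a type II tile it becomes a type $0$ tile, lowering $l'$ by one and fixing $l''$; if it is a type IV tile it becomes a type II tile, lowering $l''$ by one and fixing $l'$. The below-corner replacements (type IIIb or type IIc/IId to type II or IIIa) involve neither type $0$ nor type IV tiles, so they affect none of the three counts. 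Hence $l$ is constant while at least one of $l'$, $l''$ strictly drops, and $(l,l',l'')$ decreases lexicographically in every case.

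Combining the two steps proves the final sentence: if a mosaic minimizing $T$ admitted an arc carrying a defined corner-corner move, performing it would yield, by the first step, a dual for the same $K$, and, by the second step, one with strictly smaller $T$ — contradicting minimality. The part I expect to be the real work is the planar-isotopy verification, since it must be run across all the corner types of Figure~\ref{fig:cc}, for both the type $0$ and blank possibilities of each tile $T_{i-1,w}$ under the cap, and one must fix the type IV choices created at the top corners so that the retiled block is suitably connected; the $T$-count, by contrast, is routine bookkeeping.
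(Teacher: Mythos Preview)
Your proposal is correct and follows essentially the same approach as the paper: both verify the claim directly from the definition of the move, observing that the middle swaps $T_{i-1,w}\leftrightarrow T_{i,w}$ do not change $T$ while each corner column contributes a strict drop in either $l'$ or $l''$. Your write-up is simply more explicit than the paper's --- in particular you spell out the planar-isotopy verification and the per-tile bookkeeping, whereas the paper dispatches both in a couple of sentences by appeal to the figures.
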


\begin{proof}
The lemma follows directly from the definition of the move.  The tiles between columns $s$ and $t$ swap places, but pairwise remain identical and  thus cannot change $T$.  As seen in the figures no matter which configuration appears in column $s$ and $t$, the ordered triple $T$ decreases in these columns. 
Specifically, the contributions to $|C|$ remains the same, but the contribution in column $s$ to either $|C'|$
or $|C''|$ is reduced by one and the same is true in column $t$. 
\end{proof}

We now turn our attention to the two cases in which the corner-corner move was not defined to see that neither of these is a problem.  The following lemma states that the first one can never occur in a reduced mosaic  complement.

\begin{lemma}
A corner-corner arc $a$ 
with $T_{i-1,t}$ either type IId
or type IVb
or with $T_{i-1,s}$ type IIc or IVa cannot occur if $T$ is minimal.
\label{lemma:switchback}
\end{lemma}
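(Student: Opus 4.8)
The plan is to turn each forbidden switchback into a \emph{shorter} extremal arc on which the corner--corner move \emph{is} defined, and then to invoke Lemma~\ref{lem:nocc} to contradict the minimality of $T$. Throughout I work in a minimal embedding, so that $D$ contains no loops (Corollary~\ref{cor:noloops}) and no bubbles (Theorem~\ref{thm:burst}). By the rotational and reflective symmetry used to set up the corner--corner move, it suffices to treat one representative case, say the inward switchback in which $T_{i-1,t}$ is type IId, lying directly below the right corner $T_{i,t}$ of a local-maximum arc $a$. The case $T_{i-1,s}$ of type IIc follows by reflection, and the two type IV cases are handled at the end.

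First I would read off the local picture from the tile definitions. The right corner $T_{i,t}$ joins its Left and Bottom edges, while the inward switchback $T_{i-1,t}$ (type IId) joins its Top and Left edges. Stacked in column $t$, these two tiles share their common horizontal edge and each sends a strand out its Left edge, so together they form a two-tile concave-left arc (a $\supset$); that is, they constitute a \emph{rightward} local maximum $a'$ of the very same edge. Rotating the mosaic $90^\circ$ carries $a'$ to an ordinary local-maximum arc of length two, to which the corner--corner move applies.

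The heart of the argument is to guarantee that the move is genuinely defined on such a shortened arc. Since $a'$ has length two it has no middle tiles, so the only possible obstruction is a \emph{further} inward switchback at one of its (rotated) corners, coming either from the left corner of the original arc $a$ or from the switchback strand turning inward again in row $i-1$. If such an obstruction is present, it produces by the same mechanism an even more deeply nested side-extremal arc enclosing strictly less area. I would therefore pass to an innermost arc: among all extremal arcs of $D$ (in any of the four rotational directions) that arise this way, choose one whose outside disk has minimal area, which exists by finiteness of the bounded interior ($|D|\le n-4$). For that arc no inward switchback can remain, since a switchback would exhibit a strictly smaller enclosed arc; hence the corner--corner move is defined there, and applying it via Lemma~\ref{lem:nocc} reduces $T$ without altering $K$, the desired contradiction. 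I expect this innermost/termination step to be the main obstacle, precisely because one must rule out an infinite regress of nested switchbacks; the minimal-enclosed-area choice is the clean remedy.

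Finally I would dispatch the type IV cases $T_{i-1,t}=\text{IVb}$ and $T_{i-1,s}=\text{IVa}$. Each such double elbow contains exactly the same inward-turning arc as in the IId (resp.\ IIc) case, together with one extra arc. The inward-turning arc again pairs with the corner above to form the two-tile side-extremum $a'$, and, crucially, after the $90^\circ$ rotation the type IV tile now sits at a \emph{corner} of $a'$ rather than directly below one. Because the corner--corner move is defined when a corner tile is type IV (the prescription sends IVa to IIc and IVb to IId), and is undefined only when a type IV tile lies directly below a corner, the move applies and the innermost argument goes through verbatim; the untouched second arc of the double elbow changes neither $K$ nor $T$. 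Thus every configuration listed in the statement forces a $T$-reducing move, so none of them can occur when $T$ is minimal.
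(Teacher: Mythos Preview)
Your approach shares the paper's core idea: pass to the two-tile side-extremal arc $a' = T_{i,t} \cup T_{i-1,t}$ and apply the corner-corner move there after a quarter-turn. Your treatment of the type~IV cases (observing that the IV tile lands at a \emph{corner} of $a'$, where the move is defined) is also essentially the paper's. The divergence is in how you handle the situation where the move on $a'$ is itself obstructed: you recurse via an innermost/minimal-area argument, whereas the paper does not recurse at all but instead invokes the bubble-release lemma.

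Your recursion has a genuine gap. When $t = s+1$ (so $a$ itself has length two), the tile to the left of the top corner of $a'$ is $T_{i,s}$, the left corner of $a$, which is type~IIb or~IVb --- a switchback for $a'$. Your prescription then produces the new two-tile arc $T_{i,t} \cup T_{i,s} = a$, and the process cycles $a \to a' \to a \to \cdots$ without ever reaching an arc on which the move is defined. There is no ``strictly smaller enclosed area'' available to break this cycle; indeed the ``outside disk'' of a sub-arc (as opposed to a full edge of $D$) is not defined in the paper's setup, so your well-ordering is not actually a well-ordering. The paper closes exactly this case with Lemma~\ref{lem:norelease}: in the $2\times 2$ block $\{T_{i,s},T_{i,t},T_{i-1,s},T_{i-1,t}\}$ the tile $T_{i-1,s}$ must receive dual strands from both its top and its right edge, forcing it to be type~IIc or~IVa (a bubble, excluded by Theorem~\ref{thm:burst}) or type~IVb (a bubble-release configuration, excluded by Lemma~\ref{lem:norelease}). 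That bubble-release step is what your argument is missing.
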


\begin{proof}

Given $a$ in the mosaic  complement running from $T_{i,s}$ to $T_{i,t}$
if $T_{i-1,t}$ in the mosaic  complement is either type IVb  (meaning $R_{i-1,t}$ is type I)
or type IId
then the corner-corner move is not defined on $a$.  
Let the portion of $e$ in tiles $T_{i,t}$ and $T_{i-1,t}$ be called $a'$.  Arcs $a_1'$ and $a_2'$ in Figure~\ref{fig:mosaic  complementblock2} are examples of such arcs.  
If $t=s+1$ and $T_{i,s}$ is type IV then we are in a situation such as Figure~\ref{fig:bubblerelease}, but this
is impossible since $T$ is minimal and the existence of a bubble release move would contradict minimality.  
Given the structure of a corner-corner arc $a$ together with adjacent two-tile corner-corner arc $a'$, this is the only case in which the corner-corner move is not defined on $a'$
Therefore we push it to the left so in all other cases a corner-corner move can be applied to $a'$ reducing $T$
just as it can be to $a_1'$ and $a_2'$ in Figure~\ref{fig:mosaic  complementblock2}.
By Corollary~\ref{lem:nocc} we know that the move cannot
happen if $T$ is minimal, so $T_{i-1,t}$ cannot be type 
IId or IVb. The analogous argument holds by reflective symmetry
if $T_{i-1,s}$ is type IIc or IVa.
\end{proof}

Thus it is not a problem that the corner-corner move was not defined in this context.  We are left only with 
the following situations in which the corner-corner move was not defined. We could have
a corner-corner arc $a$ in the mosaic  complement running across row $i$ from $T_{i,s}$ to $T_{i,t}$ and if $t >s+1$ we have a mosaic  complement tile $T_{i-1,w}$ with $s<w<t$ 
 that is type IIa, IIb, or  IIIa.   If $t=s+1$ then $T_{i,s}$ is type IVb and $T_{i-1,t}$  is type IVa; $T_{i,s}$ may also be type IVb and $T_{i-1,t}$  may be type IVa if $t> s+1$, too, of course.

\begin{lemma}
If $T$ is minimal, then the only way we can have a corner-corner arc $a$ in row $i$ is if 
$a$ is part of a nested series of corner-corner arcs 
$\{a_2, a_3 \dots a_{i-1} \}$ with each $a_j$ contained in row $j$ for $2 \leq j \leq i-1$.
\label{lemma:nested}
\end{lemma}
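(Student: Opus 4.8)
The plan is to argue by downward induction on the row index, peeling off one nested corner-corner arc per row. The driving mechanism is that minimality of $T$ forbids ever \emph{performing} a corner-corner move (Lemma~\ref{lem:nocc}), so whenever a corner-corner arc is present the move must instead be \emph{undefined} on it; I will show that the only way this can happen is that the arc is blocked from below, and that the blocking object is itself a corner-corner arc exactly one row lower.

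First I would set up the base of the descent. Let $a = a_i$ be the given corner-corner arc in row $i$ running from $T_{i,s}$ to $T_{i,t}$, with $i > 2$. By Lemma~\ref{lem:nocc} the move cannot be applied (it would strictly lower $T$), so it is undefined on $a_i$. By Lemma~\ref{lemma:switchback} the obstruction is neither a type IId/IVb tile at $T_{i-1,t}$ nor a type IIc/IVa tile at $T_{i-1,s}$; and the adjacent-corner bubble situation ($t = s+1$ with type IV corners) is excluded by Theorem~\ref{thm:burst} together with Lemma~\ref{lem:norelease}. Hence the only remaining possibility is that some tile $T_{i-1,w}$ with $s < w < t$ is of type IIa, IIb, or IIIa.

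Next I would identify these blocking tiles as a nested corner-corner arc $a_{i-1}$ in row $i-1$. Since each $T_{i,w}$ with $s<w<t$ is type IIIa, the tiles $T_{i-1,w}$ beneath the horizontal part of $a_i$ carry no strand on their top edge, so the dual component passing through a blocking tile cannot rise into row $i$ anywhere under $a_i$; as dual arcs do not cross $a_i$ and there are no loops (Corollary~\ref{cor:noloops}), this component is trapped beneath $a_i$ and cannot escape past the descending parts of $a_i$ at columns $s$ and $t$. Its maximal horizontal run in row $i-1$ — bounded on the left by a type IIb corner and on the right by a type IIa corner, with type IIIa tiles between — is therefore a corner-corner arc $a_{i-1}$ whose corner columns satisfy $s < \cdot < \cdot < t$, so $a_{i-1}$ is strictly nested inside $a_i$. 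I would then iterate: for $i-1 > 2$, $a_{i-1}$ is again a corner-corner arc on which minimality forbids the move, so the same dichotomy manufactures a nested $a_{i-2}$ in row $i-2$, and so on. The descent cannot stall at any row $j > 2$, since there minimality forces the move to be blocked and hence produces $a_{j-1}$; it terminates only upon reaching row $2$, where the corner-corner move is undefined (it requires $i > 2$). This yields the full nested family $\{a_2, a_3, \dots, a_{i-1}\}$ beneath $a = a_i$.

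I expect the main obstacle to be the middle step: verifying that the blocking type IIa/IIb/IIIa tiles genuinely assemble into a single, well-formed corner-corner arc $a_{i-1}$, properly nested, rather than into several disjoint pieces or an arc that leaks out of the region under $a_i$. Pinning this down requires combining the no-top-edge-strand constraint on the tiles directly below $a_i$ with the non-crossing, loop-free structure of the dual, and then tracking the corner columns carefully enough to confirm the strict nesting $s < \cdot < \cdot < t$ at every level so that the inductive hypothesis (a corner-corner arc to which Lemmas~\ref{lem:nocc} and~\ref{lemma:switchback} apply) is genuinely reinstated one row down.
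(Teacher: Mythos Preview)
Your approach is essentially the paper's: use Lemma~\ref{lem:nocc} to conclude the move is undefined, invoke Lemma~\ref{lemma:switchback} to prune obstructions, identify what remains as a corner-corner arc one row lower, and iterate to row~$2$. The paper's proof is considerably more terse on the ``middle step'' you flag (it simply asserts that the arc of the dual through $T_{i-1,w}$ must be a corner-corner arc), so your added care there is welcome.

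One small inaccuracy: your case analysis is not quite exhaustive, and as a consequence your strict nesting $s < s' < t' < t$ is too strong. Lemma~\ref{lemma:switchback} rules out $T_{i-1,t}$ of type IId or IVb and $T_{i-1,s}$ of type IIc or IVa, but it does \emph{not} rule out $T_{i-1,t}$ of type IVa or $T_{i-1,s}$ of type IVb. Your bubble argument only disposes of these when $t = s+1$; for $t > s+1$ they can still occur, and in those cases the nested arc $a_{i-1}$ has a corner in column $t$ (resp.\ $s$) itself. The paper accordingly states the nesting as $s \le s' < t' \le t$. This does not affect the inductive descent, since $a_{i-1}$ is still a bona fide corner-corner arc in row $i-1$ to which the same reasoning applies.
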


\begin{proof}
We have seen already that the definition of the mosaic  complement
dramatically limits the choices for tiles beneath $a$ in row $i-1$ so the arc of the mosaic  complement containing 
$T_{i-1,w}$ must be a corner-corner arc $a_{i-1}$ from  $T_{i-1,s'}$ to $T_{i-1,t'}$ for some $s'$ and $t'$ with $s \leq s'<t' \leq t$.
Iterating the process we either find a corner-corner arc that does not have a corner-corner arc below it
in some row $j$ with $2<j \leq i$ contradicting minimality or there are nested corner-corner arcs
extending in every row from $i$ down to $2$ as in the edges $e_1, e_2$ and $e_3$ in Figure~\ref{fig:outermost}.
\end{proof}

\subsection{Corner-edge moves}

We again for simplicity choose to describe this move as it moves an arc $a$ of the mosaic  complement down, but as before, symmetrical moves
to the right, left, or up are all valid by rotations or reflections of the mosaic.  The move is very similar to the corner-corner move as are the arguments about it.

Our goal in applying the corner-edge move is to reduce $T$, 
and we will always do any available corner-corner moves before doing any corner-edge moves, 
so we need not worry about defining the corner-edge move on an edge for which a corner-corner move is possible.

{\bf Corner-edge move:}  Let $e$ be an edge of the mosaic  complement that intersects 
row $i>2$ in an arc $a$ running directly across $i$ in columns 2 through $t$ and turning down in column $t, t \geq 2$.  More precisely $e$ intersects row $i$ in an arc $a$
such that each tile in the mosaic  complement $T_{i,w}$, $w<t$  is a type IIIa tile and tile $T_{i,t}$ is a type
IIa or IVa tile as in Figures~\ref{fig:ceaII} and~\ref{fig:ceaIV}, respectively.

As in the corner-corner move, we pay close attention to any portion of the mosaic  complement in tiles $T_{i-1,w}$ with $2 \leq w<t$ (row $i-1$ directly under $a$). 
Again $T_{i-1,w}$
 cannot ever be type IIc, IId
IIIb, IV or V, but can be a
type 0 tile without causing any problems.

As before, if there is a tile in the mosaic  complement $T_{i-1,w}$ with $s<w<t$ 
 that is type IIa, IIb, or  IIIa 
then the corner-edge move is undefined on $a$. Such examples are seen in the nested edges
$f_1, f_2, f_3$ and $f_4$ in Figure~\ref{fig:outermost}. 

We may have an obstruction where $t=2$, $T_{i-1,t}$ is  type IId
or type IVb and  $T_{i,t} \cup  T_{i-1,t}$ forms a two tile outermost XX-edge, but we never apply a corner-edge move in this context so we do not mind this obstruction.
With this exception we do not encounter an obstruction to the definition where
$T_{i-1,t}$ is either type IId
or type IVa because it would lead to a reduction via a corner-corner move of tiles $T_{i,t} \cup T_{i-1,t}$
to the left which already contradicts the minimality of $T$ for the mosaic  complement.

\begin{figure}[htp]
\centering
\includegraphics[scale=.7]{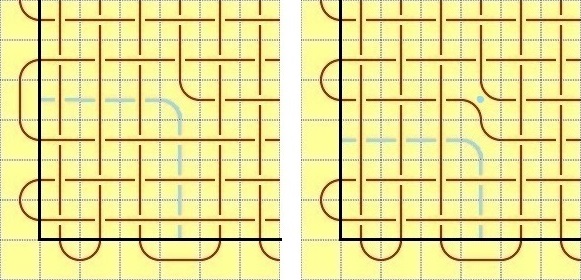}
\caption{The corner-edge move may have a type II tile in its corner.}
\label{fig:ceaII}
\end{figure}

\begin{figure}[htp]
\centering
\includegraphics[scale=.7]{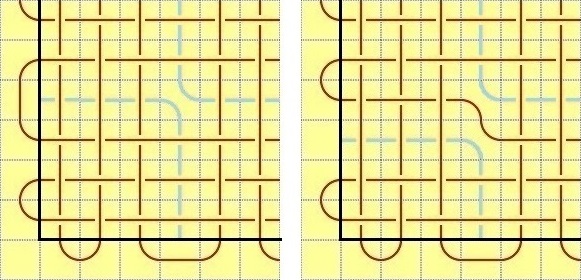}
\caption{Alternatively, the corner-edge move may have a type IV tile in its corner.  The resulting move is only slightly different.}
\label{fig:ceaIV}
\end{figure}

We again reduce $T$ by moving part of the knot across the mosaic  complement.  Figures~\ref{fig:ceaII}, \ref{fig:ceaIV}, and \ref{fig:ceaIIc}
show the basic move.

Because there can be no corner-corner moves in a minimal mosaic  complement and there are never type V tiles in a mosaic  complement, the tile $T_{i-1,t}$ must be either a type IIIb, type IIc or type IVa.
If $t=2$ and  $T_{i-1,t}$ is type IVa then the corner-edge move is not defined, but again this is an obstruction that we do not mind as we will never need to apply it in this context.  Instead examine the case where $t>2$.
If $T_{i-1,t}$ is type IVa we could swap the type IVa mosaic  complement tile with a type IVb tile, replacing mosaic  complement $C$ by mosaic  complement $B$
 without affecting the knot.   Since the only thing we have changed to go from $C$ to $B$ is one type IV tile for another,
 the ordered triple $T_C$ for $C$ is clearly identical to the ordered triple $T_B$ for $B$.
$T_B$ is reduced by a corner-corner move, showing it was not minimal for the knot $K$ whose mosaic  complement is $B$ (and $C$) and therefore $T_C$ also was not minimal for $K$.  Since we always choose our embedding of $K$ so that $T$ is minimal we may assume that tile
$T_{i-1,t}$
 is not type IVa when $t>2$.

We are now left with the possibilities that $T_{i-1,t}$ must be type IIIb (Figure~\ref{fig:ceaII}) or IIc as depicted in Figure~\ref{fig:ceaIIc} and we define the corner-edge move accordingly.
The exact prescription for the move is that for each $w<s$ we switch tile $T_{i-1,w}$ 
with tile $T_{i,w}$.  $T_{i-1,t}$ and 
tile $T_{i,t}$ are treated exactly as they were in the corner-corner move: if $T_{i,t}$ is 
type II we replace it with a type 0 tile.  If it is type IVa it is replaced with a type IIc tile.  
If $T_{i-1,t}$ is type IIIb then it is replaced by type IIa.  If it is IIc, it is replaced by type IIIa.
Typical corner-edge moves are depicted in Figures~\ref{fig:ceaII} through \ref{fig:ceaIIc}.

\begin{figure}[htp]
\centering
\includegraphics[scale=.7]{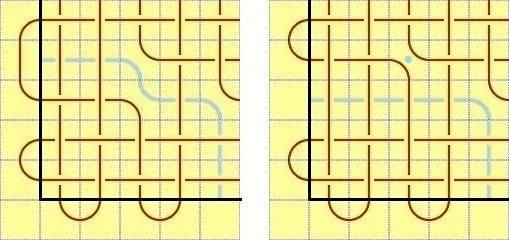}
\caption{$T_{i-1,t}$ may be a type II tile oriented as pictured instead of a type III tile.  The resulting move still reduces $T$.}
\label{fig:ceaIIc}
\end{figure}

Again this move was described in terms of a row, but it can be rotated or reflected to move corner to edge row arcs up and down and corner to edge column arcs right and left.

\begin{lemma}
A corner-edge move causes only a planar isotopy of $K$ and reduces $T$. Therefore in a minimal mosaic, there cannot be an arc on which a corner-edge move may be applied.
\label{lem:noce}
\end{lemma}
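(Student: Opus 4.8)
My plan is to prove Lemma~\ref{lem:noce} in close parallel with the corner-corner case, Lemma~\ref{lem:nocc}, since the corner-edge move is assembled from the same tile swaps. There are two assertions to establish: that the move induces only a planar isotopy of $K$, and that it strictly decreases $T=(l,l',l'')$ lexicographically. Once both are in hand the final sentence is immediate: if a minimal mosaic admitted an arc on which a corner-edge move could be performed, the move would produce a new dual for the same knot $K$ with strictly smaller $T$, contradicting minimality of $T$.

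For the isotopy claim I would argue exactly as the figures \ref{fig:ceaII}, \ref{fig:ceaIV}, and \ref{fig:ceaIIc} illustrate. The move takes the sub-arc $a$ of the edge $e$ that forms a local maximum across row $i$ and lowers it by one row, which is the same as pushing the strand of $K$ lying directly above $a$ down across it. Because the tiles in row $i-1$ underneath the horizontal run are type $0$ or blank (the only possibilities the definition permits there when the move is defined), nothing in the dual obstructs the slide, so the entire move is a planar isotopy of the projection; in particular the knot type of $K$ is unchanged.

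The heart of the proof, and the step I expect to require the most care, is the bookkeeping for $T$. As in Lemma~\ref{lem:nocc}, the interior tiles $T_{i,w}$ and $T_{i-1,w}$ for $2\le w<t$ are swapped in pairs, so the multiset of dual tiles in those columns is unchanged and their contribution to each of $l$, $l'$, $l''$ is unaffected. At the left end of $a$ the edge simply runs off the side of $S$, and after the move its endpoint has merely descended one row along $\partial S$; since the swap at column $2$ only permutes a type IIIa tile between two rows, this end likewise contributes no change to $T$. Thus, unlike the corner-corner move, all of the reduction must come from the single corner in column $t$, and I would verify it by checking each admissible configuration there. The definition leaves exactly the cases $T_{i,t}\in\{\text{IIa},\text{IVa}\}$ with $T_{i-1,t}\in\{\text{IIIb},\text{IIc}\}$, the obstructing possibilities having already been excluded by the preceding discussion, by Lemma~\ref{lemma:switchback}, and by the minimality assumption (for instance $T_{i-1,t}$ type IVa with $t>2$ is removed by swapping to IVb and applying a corner-corner move). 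In every such case a direct count shows that $l=|D|$ is preserved while exactly one of $l'=|D'|$ or $l''=|D''|$ drops by one: when $T_{i,t}$ is a type II corner it becomes a type $0$ tile, lowering $l'$, and when it is type IVa it becomes type IIc, lowering $l''$, with the tile below rewritten to a type II or type III tile that keeps $l$ fixed. Hence $T$ decreases lexicographically in every case.

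Combining the two parts, the corner-edge move yields a dual for the same $K$ with strictly smaller $T$, so no such move can be available when $T$ is minimal, which is the conclusion of the lemma. The only genuine obstacle is making the corner-column case analysis airtight and confirming that the boundary end of the corner-edge arc, being an edge rather than a corner, truly contributes nothing to $T$; every other ingredient transfers verbatim from the corner-corner argument.
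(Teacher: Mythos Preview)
Your proposal is correct and follows exactly the approach the paper intends: the paper's own proof simply says the move ``follows directly from the definition'' and is ``analogous to the corner-corner move,'' and your argument spells out precisely that analogy, correctly noting that here only the single corner column $t$ contributes the drop in $l'$ or $l''$ while the boundary end at column $2$ (a swapped type IIIa tile) leaves $T$ unchanged. Your case check at column $t$ is accurate and matches the tile replacements prescribed in the definition, so there is nothing to add.
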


\begin{proof}
The lemma follows directly from the definition of the move.  The argument is analogous to Lemma~\ref{lem:nocc}. 
\end{proof}

We now have the moves defined and in the next section will turn our focus to the XX-edges (edges with both endpoints on the same edge), showing that they cannot exist without contradicting minimality.  Then once we know there are no edges of this type we will eliminate XY-edges, too.

\section{Reduction steps towards the main theorem}

\begin{lemma}
\label{lem:XXedgetype2}
If $E = \{e_1,e_2,\dots,e_n\}$ is the set of all edges in $C$ and $|C| \leq n-4$ then there is some $e_i$ containing a type II tile in the mosaic  complement.  If the XX-edges do not share a type IV tile with the XY-edges then at least one of the XX-edges contains a type II tile or the set is empty.  The same is true for the XY-edges.

\end{lemma}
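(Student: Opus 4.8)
The plan is to argue by contradiction, exploiting the full strength of the standing minimality hypotheses. Recall that we may assume $T$ is minimal, that $D$ contains no loops (Corollary~\ref{cor:noloops}) and no bubbles (Theorem~\ref{thm:burst}), and that none of the reducing operations is available: no bubble release (Lemma~\ref{lem:norelease}), no XX-through-XY move, and no corner-corner or corner-edge move (Lemmas~\ref{lem:nocc} and~\ref{lem:noce}). The one structural fact I would establish first is purely topological: every edge of $D$ has both endpoints on $\partial S$ and therefore must turn — an XY-edge has net turning $90^\circ$ and an XX-edge net turning $180^\circ$ — so every edge contains at least one corner tile, that is, a tile of type II or type IV. The whole content of the lemma is thus to upgrade ``at least one type IV corner'' to ``at least one type II corner.''

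For the first assertion I would suppose, toward a contradiction, that no edge contains a type II tile, so that every corner of every edge is type IV and every straight run is type III. Since $D'\neq\emptyset$ and $D$ has no loops, there is an outermost edge $e$; after a rotation I would arrange that the outside disk of $e$ lies below its topmost arc $a$, which is a local maximum in some row $i$ with type IV corners $T_{i,s}$ and $T_{i,t}$. Because $e$ is outermost, the outside region directly beneath $a$ contains no edges, so every tile $T_{i-1,w}$ with $s<w<t$ is type $0$ or blank; this kills the ``arc below'' obstruction to the reduction moves. I would then split on the extreme arc. If the two type IV corners are adjacent ($t=s+1$) they form exactly a bubble release configuration, contradicting Lemma~\ref{lem:norelease}. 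Otherwise $a$ is a genuine corner-corner arc (if it stays interior) or a corner-edge arc (if one end meets a side of $S$), and the move is defined — the switchback obstruction $T_{i-1,t}\in\{\text{IId},\text{IVb}\}$ or $T_{i-1,s}\in\{\text{IIc},\text{IVa}\}$ is excluded by Lemma~\ref{lemma:switchback} — so Lemma~\ref{lem:nocc} or Lemma~\ref{lem:noce} reduces $T$, contradicting minimality. The remaining possibility is that $a$ sits atop a nested family of corner-corner arcs (Lemma~\ref{lemma:nested}); here I would descend to the innermost arc near row $2$, whose outside is again empty, and run the same dichotomy. Every branch contradicts minimality, so some edge must contain a type II tile.

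For the second and third assertions I would rerun the outermost argument inside a single family, say the XX-edges, choosing an edge that is outermost \emph{among the XX-edges} and assuming no XX-edge has a type II corner. This is where the hypothesis that the XX-edges share no type IV tile with the XY-edges does the essential work: it guarantees that the type IV corner tiles of the XX-edges, together with their partner arcs, belong only to the XX-family, so the bubble release or corner-corner/corner-edge move performed on $e$ alters only XX-edges and leaves every XY-edge (hence the rest of the dual) untouched. Consequently the move still yields a genuine dual for $K$ and still strictly lowers $T$, contradicting minimality unless the XX-family is empty. The same argument with the roles of XX and XY interchanged gives the statement for XY-edges.

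The main obstacle, and the place I would spend the most care, is controlling the partner arcs of the type IV corners. A type IV corner is a double corner, so each such corner carries a second arc belonging to some other component of the dual; I must verify that these partner arcs neither lie in the supposedly empty outside region nor obstruct the definition of the reduction move on the extreme arc. This is exactly why the non-sharing hypothesis is indispensable for parts two and three: without it, a type IV corner of an XX-edge could simultaneously be part of an XY-edge, so swapping or reducing it could corrupt an XY-edge, and the move might fail either to stay within the family or to reduce $T$. The secondary difficulties are the boundary case, where the extreme arc is a corner-edge rather than a corner-corner arc, and the nested case, both of which I would dispatch by passing to the innermost or boundary-most arc, where the outside region is manifestly empty.
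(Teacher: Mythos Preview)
Your approach is genuinely different from the paper's, and considerably heavier. The paper gives a three-line argument that uses \emph{only} the hypothesis $|D|\le n-4$ and no minimality at all: every XX-edge has at least two ``outside'' corners and every XY-edge at least one; each such corner is type~II or type~IV; if none were type~II, then every corner is a type~IV tile whose partner arc belongs to another edge, and chasing these forced continuations shows the dual would have to reach from one side of $S$ to the opposite side, whence $|D|\ge n-2$, contradicting $|D|\le n-4$. The same chase, restricted to one family under the non-sharing hypothesis, gives the second and third assertions. No reduction moves, no outermost analysis, no appeal to the minimality of $T$.

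Your route — invoking bubble release, corner-corner and corner-edge moves against an outermost edge — is plausible for the first assertion, but it proves a weaker statement (only for minimal $T$) with much more machinery, and it essentially reproves fragments of the structural lemmas that \emph{follow} Lemma~\ref{lem:XXedgetype2} in the paper. More seriously, there is a gap in your treatment of the second and third assertions. You choose $e$ outermost \emph{among the XX-edges} and claim the outside region beneath its extreme arc $a$ contains no edges, so that the reduction move is unobstructed. But ``outermost among XX-edges'' does not make the outside region empty: an XY-edge can perfectly well lie beneath $a$, occupying some $T_{i-1,w}$ with $s<w<t$, and then the corner-corner move on $a$ is undefined. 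The non-sharing hypothesis only controls what happens \emph{at} the type~IV corner tiles of $e$ — it says the partner arc there is another XX-edge — and says nothing about XY-edges sitting in the interior of the outside region. So the assertion that the move ``alters only XX-edges and leaves every XY-edge untouched'' is not justified, and the contradiction with minimality does not go through. The paper's counting argument sidesteps this entirely: under the non-sharing hypothesis the type~IV partner arc at each XX-corner is itself part of an XX-edge, so the chain of XX-edges alone must stretch across $S$.
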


\begin{proof}
Each XX-edge has at least two corners which are either a type II tiles or else Type IV tiles where they meet another edge of the mosaic  complement.  Similarly each XY-edge must have at least one such corner.
To avoid any type II tiles, the mosaic  complement would have to stretch from one side of $S$ to the opposite side,
but this would mean $|C| \geq n-2$ violating the hypothesis of the lemma.
\end{proof}

Lemmas~\ref{lem:XXccup} through \ref{nortorlt} put together will show that if $e$ is an XX-edge with both endpoints on the bottom in a minimal mosaic  complement then $e$ contains exactly one corner-corner arc, and that arc can only be concave down.  
We note that as always, symmetrical arguments can be made by rotation and reflection for edges with endpoints on the other sides of $S$.

\begin{lemma}
\label{lem:XXccup}
If $e$ is an XX-edge in $C$ with both endpoints on the bottom side of $S$ or an XY-edge with one endpoint on the bottom of $S$ and the other on the left side and $|C| \leq n-4$
and $T$ is minimal then 
$e$ cannot contain a corner-corner arc $a$ 
that is concave up.  By symmetry this also means the XY-edge cannot have a corner-corner arc that is concave right.

\end{lemma}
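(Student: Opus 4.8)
The plan is to argue by contradiction: suppose $e$ contains a concave-up corner-corner arc $a$, lying in some row $i$. First I would record two easy structural facts. Because $e$ has both endpoints on the bottom (respectively one on the bottom and one on the left), and because $a$ is a local minimum, the strand of $e$ leaving at least one corner of $a$ must first rise to a concave-down peak strictly above row $i$ and then descend all the way back to the bottom boundary; in particular $e$ occupies at least one tile in each of the rows $2,3,\dots,i$ (the arc $a$ itself lives in row $i$). Second, $a$ cannot sit in row $n-1$: a concave-up arc there has both corners turning up into $\partial S$, which would force an endpoint of $e$ on the top side, contrary to hypothesis. Hence $2 \le i \le n-2$.

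Next I would attempt the reduction move. By the symmetry built into the definitions (rotate the mosaic by $180^\circ$), the corner-corner move for a local minimum pushes $a$ upward into its dip and, by Lemma~\ref{lem:nocc}, strictly lowers $T$ whenever row $i+1$ directly inside the dip is unobstructed. Since $T$ is minimal the move must be blocked, and, reading the proof of Lemma~\ref{lemma:nested} upward, the dual structure forces the obstruction to be another concave-up corner-corner arc sitting directly above $a$. Iterating, minimality produces a nested tower of concave-up arcs, one in each of the rows $i+1,\dots,n-1$, whose innermost member closes off against the top boundary.

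Finally I would extract the contradiction by counting tiles of $D$. The tower contributes at least its two corners in each of the rows $i+1,\dots,n-1$, while $a$ together with the descent of $e$ described above contributes at least two tiles in each of the rows $2,\dots,i$. These two collections occupy disjoint rows, so in the XX case $|D| \ge 2(n-1-i) + 2(i-1) = 2n-4$, which already contradicts $|D| \le n-4$. In the XY case only one guaranteed descent to the bottom is available, but the same bookkeeping gives $|D| \ge 2(n-1-i) + i = 2n-2-i \ge n > n-4$ for $i \le n-2$, again a contradiction. The concave-right assertion for the XY-edge is the reflection of this argument across the diagonal, with the tower growing toward the right side of $S$ rather than the top.

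The step I expect to be the main obstacle is the blocked case: one must be certain that the upward obstruction is genuinely a corner-corner arc and not, say, a type $0$ tile (over which the move is still defined and so does not block) or a peak belonging to $e$ itself. This is exactly what the upward-read analogue of Lemma~\ref{lemma:nested} supplies, since its proof only uses that a type IIIa run severely restricts the adjacent tiles, and that restriction is preserved under the rotation. Once the tower reaching the boundary is in hand, the disjoint-row tile count is routine.
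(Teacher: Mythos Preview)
Your argument is correct and follows essentially the same route as the paper: invoke the rotated version of Lemma~\ref{lemma:nested} to produce a tower of nested concave-up arcs filling every row above $a$, combine this with the observation that $e$ itself must occupy every row from row $2$ up to $a$, and then count tiles to contradict $|D|\le n-4$. The paper's version is slightly leaner only in the final count: it observes that one dual tile in each of the $n-2$ rows of $S$ already gives $|D|\ge n-2>n-4$, so your sharper two-tiles-per-row bookkeeping (yielding $2n-4$ or $2n-2-i$) is correct but more than what is needed.
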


\begin{proof}

By Lemma~\ref{lemma:nested},
$T$ can be reduced via a corner-corner move applied to $a$ unless there is a nested set of corner-corner arcs inside of $a$ including one in each of the rows of $S$ above $a$.  
This, however, cannot happen since it would imply that there are mosaic  complement tiles in every row of $S$, contradicting $|C| \leq n-4$.
\end{proof}

\begin{lemma}
\label{oneccdown}
\label{lem:oneccdown}
Suppose $|C| \leq n-4$ and $T$ minimal. Let $e$ be an XX-edge in $C$ with both endpoints on the bottom edge of $S$
 or an XY-edge with one end point on the bottom of $S$. 
If $a$ is a corner-corner arc of $e$ in row $i$ that is concave down, then $a$ is the only corner-corner
arc on $e$ that is concave down.
\end{lemma}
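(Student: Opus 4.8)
The plan is to argue by contradiction, using the row height along $e$ together with Lemma~\ref{lem:XXccup}. Suppose $e$ contained two distinct concave-down corner-corner arcs $a$ and $a'$, say in rows $i$ and $i'$. A concave-down corner-corner arc is precisely a \emph{local maximum} of the height: as described in the corner-corner move, the edge comes up into its left corner, runs horizontally across the top through type IIIa tiles, and turns back down at its right corner (here the $\cap$-shape is concave down, hence a peak of the row function). I would parametrize $e$ from one endpoint to the other and track $h$, the row index, as a function of the parameter; then $a$ and $a'$ are two strict local maxima of $h$ occurring on disjoint subarcs of $e$.

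Between these two maxima, the restriction of $h$ to the portion of $e$ joining them must attain a value strictly below both peaks, so $h$ has a local minimum on that portion. First I would check that this minimum lies in the interior of $S$: the only points of $e$ on $\partial S$ are its two endpoints (both on the bottom in the XX case, one on the bottom and one on the left in the XY case), and neither endpoint is interior to the subarc between $a$ and $a'$, so that subarc stays inside $S$. Next I would verify that an interior local minimum of $h$ is exactly a concave-up corner-corner arc. Because each tile carries an arc joining only two of its four edges, the edge cannot reverse from descending to ascending within a single tile; it must bottom out along a maximal horizontal run in some lowest row $j$, arriving from above on the left and leaving upward on the right. This is the $\cup$-shaped configuration, i.e.\ a concave-up corner-corner arc, where the degenerate case $t = s+1$ with adjacent corners is still permitted.

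Having produced a concave-up corner-corner arc on $e$, I would invoke Lemma~\ref{lem:XXccup}: an XX-edge with both endpoints on the bottom of $S$, or an XY-edge with one endpoint on the bottom, cannot contain a corner-corner arc that is concave up when $|D|\leq n-4$ and $T$ is minimal. This contradicts the existence of the local minimum between $a$ and $a'$, so $e$ can contain at most one concave-down corner-corner arc.

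The only genuine obstacle is the structural claim of the second paragraph: that every local minimum of $h$ strictly between the two peaks is actually a concave-up corner-corner arc of exactly the form ruled out by Lemma~\ref{lem:XXccup}, and that it sits strictly inside $S$ rather than sliding onto $\partial S$. This reduces to a short but careful case check on the admissible dual tile types (types II, III, IV, and $0$) along a maximal horizontal run at a locally lowest row, using that any reversal of vertical direction forces such a horizontal run to exist. Everything else is the elementary observation that a continuous function with two strict local maxima must have a local minimum between them.
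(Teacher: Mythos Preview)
Your proposal is correct and follows essentially the same route as the paper's own proof: two concave-down corner-corner arcs are two local maxima of the row-height function along $e$, a local minimum must occur between them, that minimum is a concave-up corner-corner arc, and Lemma~\ref{lem:XXccup} forbids such an arc. The paper compresses this into two sentences and does not spell out the interior/tile-type verification you flag as the ``only genuine obstacle,'' so your version is a more careful elaboration of the same argument rather than a different one.
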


\begin{proof}

A second concave down corner-corner arc would require
a concave up
 corner-corner arc between the two: an edge in the plane with endpoints at the same height may not have two local maxima
without a local minimum.  We know we cannot have a 
concave up
corner-corner arc on $e$ by Lemma~\ref{lem:XXccup}.
\end{proof}

\begin{lemma}
\label{lem:rtorlf}
Let $e$ be an XX-edge or XY-edge in $C$
where $|C| \leq n-4$ and $T$ is minimal. 
Then $e$ cannot contain a corner-corner arc 
concave to the left (representing a maximum in the direction right) and also a corner-corner arc concave right.

\end{lemma}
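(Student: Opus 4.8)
The plan is to argue by contradiction and to convert the claim into a counting bound on the number of occupied columns of $S$, in exact parallel with the row-counting argument of Lemma~\ref{lem:XXccup}. So I would suppose that $T$ is minimal, that $|D| \le n-4$, and that a single edge $e$ simultaneously carries a corner-corner arc $a_L$ that is concave left (a maximum in the rightward direction, say running vertically in column $p$) and a corner-corner arc $a_R$ that is concave right (a maximum in the leftward direction, running vertically in column $q$). The whole point is that these two features force the dual to be ``too wide'' to fit the hypothesis $|D|\le n-4$.

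The first step is to feed each arc into Lemma~\ref{lemma:nested} after an appropriate rotation of the mosaic. Rotating ninety degrees so that $a_L$ becomes an ordinary concave-down corner-corner arc, minimality of $T$ forbids $a_L$ from occurring in isolation: by Lemma~\ref{lemma:nested} it must sit atop a nested family of concave-left arcs, one in each successive column running inward from $a_L$ all the way to the left side of $S$. Read back in the original orientation, this says every column from column $2$ through column $p$ contains a tile of $D$. Rotating the other way and applying the same lemma to $a_R$ gives the mirror statement: every column from $q$ through column $n-1$ contains a tile of $D$.

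The second step is to splice the two column ranges together, and here I would use that $a_L$ and $a_R$ lie on the \emph{same} connected edge $e$. The sub-arc of $e$ joining column $p$ to column $q$ passes through every intervening column, so those columns are occupied as well; this fills any gap in the case $p<q$, while for $p\ge q$ the two ranges already overlap. In every case the three ranges together occupy all $n-2$ interior columns of $S$, so $|D|\ge n-2$, contradicting $|D|\le n-4$ and finishing the proof.

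The step I expect to require the most care is the justification that the nested families genuinely propagate to the two opposite sides of $S$ rather than terminating early: this is exactly where minimality of $T$ is indispensable, since at any column where the nesting failed a corner-corner move would be available and would reduce $T$ by Lemma~\ref{lem:nocc}. That dichotomy is already packaged inside Lemma~\ref{lemma:nested}, so I would lean on it directly rather than re-running the corner-by-corner tile analysis; the remaining bookkeeping about corner tile types is routine and identical to the corner-corner analysis already carried out. The genuinely new observation, and the crux of the argument, is simply that a single edge hosting opposite-facing maxima must stretch across the full width of $S$.
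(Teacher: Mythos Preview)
Your proposal is correct and follows essentially the same approach as the paper: apply Lemma~\ref{lemma:nested} (after rotation) to each of the two opposite-facing corner-corner arcs, conclude that the dual occupies at least one tile in every column of $S$, and contradict $|D|\le n-4$. The paper's proof is only two sentences and simply asserts that the two nested families together hit every column; your splice-the-gap step, using that $a_L$ and $a_R$ lie on the same connected edge $e$, fills in a detail the paper leaves implicit and makes the column count airtight in the case $p<q-1$.
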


\begin{proof}
Each corner-corner arc would need nested corner-corner arcs going all the way to the edge of $S$.  This would, of course, require at least one tile in each column of $S$, contradicting the fact that $|C| \leq n-4$.
\end{proof}

We have now established several lemmas that work for both XX-edges and XY-edges.  The next few lemmas will be just concerned with XX-edges.  After establishing further structure on the XX-edges we will 
be able to return to the XY-edges and deal with them more efficiently.

\begin{lemma}
\label{nort}
Let $e$ be an XX-edge in $C$ with both endpoints on the bottom edge of $S$ with $|C| \leq n-4$ and $T$ minimal.
Let the left endpoint of $e$ be in column $s$ and the right endpoint in column $t$, $s<t$.  We cannot encounter a corner-corner arc concave to the left intersecting column $w$ for $w<t$. The same is true for corner-corner arcs concave right in columns $w$ with $s<w$

\end{lemma}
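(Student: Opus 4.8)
The plan is to argue by contradiction and reduce to Lemma~\ref{lem:rtorlf}. The key observation is that a concave-left arc whose vertical run sits strictly to the left of the right endpoint of $e$ cannot be joined to that endpoint without the edge making a leftward U-turn, and such a U-turn is itself a concave-right arc. A direct counting argument (nesting the arc to the left boundary) only yields a contradiction when $t$ is near $n-1$, so the turn-around observation is what covers all $w < t$.

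So I would suppose $e$ contains a concave-left corner-corner arc $a$ with vertical run in column $w$, where $w < t$. Being concave to the left, $a$ is a local maximum in the rightward direction, so both of its legs leave the vertical run heading left; in particular, immediately off of $a$ the edge lies in columns strictly less than $w$. Now follow the leg of $a$ that eventually reaches the right endpoint of $e$, which lies in column $t$. Reading the column coordinate along this portion of $e$, it begins at a column $< w$ (just left of $a$) and must finish at column $t > w$. Hence this leg attains some leftmost column $c \le w-1$ and then increases out to column $t$.

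The heart of the argument is to check that this leftmost column is an honest interior concave-right arc $b$ of $e$, so that Lemma~\ref{lem:rtorlf} applies. I would argue that $c$ cannot be the left boundary column of $S$: an edge can only terminate on $\partial S$, and $e$ already has both endpoints on the bottom, so if the leg reached the left side it would create a third endpoint, which is impossible. Therefore the leg executes a genuine $180^\circ$ turn in the interior—entering column $c$ from the right, running vertically, and leaving to the right—which is precisely the tile pattern of a concave-right corner-corner arc. Thus $e$ contains both the concave-left arc $a$ and the concave-right arc $b$, contradicting Lemma~\ref{lem:rtorlf}.

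The assertion about concave-right arcs in columns $w > s$ then follows by reflecting the mosaic across a vertical line: this fixes the bottom side of $S$, reverses the columns, interchanges the two endpoints, and sends a concave-right arc at column $w > s$ to a concave-left arc at a column strictly left of the new right endpoint, which we have just ruled out. The step I expect to require the most care is the middle one—confirming that the forced reversal is realized as a single concave-right corner-corner arc of exactly the kind Lemma~\ref{lem:rtorlf} addresses, and that it stays off the boundary—since once that is in hand the contradiction is immediate.
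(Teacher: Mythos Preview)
Your argument is correct and is exactly the paper's approach: the paper's proof is the one-line observation that to connect a concave-left arc sitting in a column $w<t$ back to the right endpoint $T_{2,t}$, the edge must turn around via a corner-corner arc concave in the opposite direction, contradicting Lemma~\ref{lem:rtorlf}. Your only superfluous step is the worry that $c$ might be the leftmost column of $S$; even if $c=2$ the edge still executes a concave-right corner-corner arc there (it cannot terminate on the left side since both endpoints are on the bottom), and Lemma~\ref{lem:rtorlf} applies without any restriction on the column, so that case needs no special handling.
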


\begin{proof}
If $a$ is such a corner-corner arc in $e$, to connect 
with $T_{2,t}$ in the first case and $T_{2,s}$ in the second,
$e$ would need to turn around
via a corner-corner arc 
concave in the opposite direction
 contradicting Lemma~\ref{lem:rtorlf}.
\end{proof}

%%%%%%%%%%%%%%%%%%55

\begin{lemma}
\label{nortorlt}
Let $e$ be an XX-edge in a minimal mosaic  complement $C$ with both endpoints on the bottom edge of $S$ with $|C| \leq n-4$ and $T$ minimal.
Then $e$ cannot have any corner-corner arcs concave to the left or right.

\end{lemma}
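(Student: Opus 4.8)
The plan is to argue by contradiction, ruling out a concave-left corner-corner arc on $e$ and then obtaining the concave-right case from the left--right reflection of $S$. The two ingredients are the positional restriction of Lemma~\ref{nort} and the nesting phenomenon of Lemma~\ref{lemma:nested}, the latter read through a $90^\circ$ rotation so that it applies to concave-left rather than concave-down arcs.

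First I would fix notation. Let the endpoints of $e$ lie in columns $s<t$ on the bottom of $S$; since the interior columns of $S$ run from $2$ to $n-1$ we have $s\geq 2$ and hence $t\geq 3$. Suppose, for contradiction, that $e$ contains a concave-left corner-corner arc $a$, and say $a$ lies in column $w$. By Lemma~\ref{nort} a concave-left arc can occur only in a column $\geq t$, so $w\geq t$.

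Next I would invoke the rotated form of Lemma~\ref{lemma:nested}. Because $T$ is minimal, no corner-corner move is available on $a$, so after rotating the mosaic $90^\circ$ (sending ``concave down'' to ``concave left,'' ``down'' to ``left,'' and the bottom row $2$ of $S$ to the leftmost interior column $2$), Lemma~\ref{lemma:nested} forces $a$ to be the outermost member of a nested family of concave-left corner-corner arcs occupying every column $2,3,\dots,w$. In particular this family contains a concave-left corner-corner arc in column $2$. But $2<t$, so this is a concave-left corner-corner arc in a column strictly less than $t$, which Lemma~\ref{nort} forbids. Hence $e$ can contain no concave-left arc.

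Finally, reflecting $S$ left to right interchanges concave-left and concave-right arcs and sends column $j$ to column $(n+1)-j$; running the identical argument on the reflected edge produces a concave-right corner-corner arc in column $n-1$, and since $s\leq n-2<n-1$ this contradicts the concave-right clause of Lemma~\ref{nort}. Thus $e$ has no corner-corner arc that is concave left or concave right. The step I expect to require the most care is the rotational bookkeeping: one must verify that the nesting of Lemma~\ref{lemma:nested} propagates toward column $2$ for a concave-left arc (and toward column $n-1$ for a concave-right arc), since it is precisely the appearance of such an arc in a column on the far side of an endpoint that collides with Lemma~\ref{nort}. Once the direction of propagation is pinned down, the contradiction is immediate.
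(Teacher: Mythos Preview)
Your argument has a gap at the step where you invoke Lemma~\ref{nort} on the nested concave-left arc in column~$2$. Lemma~\ref{nort} is a statement about corner-corner arcs \emph{on the particular edge $e$} whose endpoints sit in columns $s$ and $t$; its proof works by observing that such an arc would force $e$ itself to turn back via an arc of the opposite concavity in order to reach its other endpoint. But Lemma~\ref{lemma:nested} only guarantees that the nested arcs exist somewhere in the dual $D$ --- it does not say they lie on $e$. Indeed, the innermost member of your nested family, the concave-left arc in column~$2$, exits $S$ through the left boundary at both of its corners, so it constitutes an entire XX-edge in its own right with both endpoints on the \emph{left} side of $S$. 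Lemma~\ref{nort}, even after rotation, gives no restriction on concave-left arcs carried by such an edge, so the contradiction you claim does not follow.

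The paper closes this gap by a different mechanism. Working (symmetrically) with a concave-right arc $a$ in column $w\le s$, one notes that the nested concave-right arcs produced by Lemma~\ref{lemma:nested} lie to the \emph{right} of $a$, on the same side as both endpoints of $e$. Since $e$ must travel from $a$ across to those endpoints, at least one of the nested arcs is forced to belong to $e$ itself. That yields two concave-right arcs on $e$, hence a concave-left arc on $e$ between them, and now Lemma~\ref{lem:rtorlf} gives the contradiction. The extra step you are missing is precisely this passage from ``there is a nested arc in $D$'' to ``there is a nested arc on $e$,'' and it hinges on the positional fact from Lemma~\ref{nort} that $a$ lies beyond the endpoints of $e$, so that $e$ cannot avoid the nested family on its way back.
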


\begin{proof}
We show the proof for corner-corner arcs concave to the right
since the proof to the left is identical up to symmetry.
By Lemma~\ref{nort} the concave right corner-corner arc $a$ must start and end in 
column $w$, $w\leq s$.  
However by Lemma~\ref{lemma:nested}, $a$ must have nested corner-corner arcs inside of it extending all the way to the right side
of $S$.  Since
$a$ is on the left side of $e$, this implies that at least 
one of the nested corner-corner arcs for $a$ is also in $e$.  
For $e$ to contain two corner-corner arcs that are concave to the right it must also contain a corner-corner arc concave to the left between them.  This contradicts Lemma~\ref{lem:rtorlf}
.  Thus there were
no corner-corner arcs concave to the right.
\end{proof}

These lemmas imply

\begin{cor} Let $e$ be an XX-edge in minimal mosaic  complement $C$ with both endpoints on the bottom edge of $S$ with $|C| \leq n-4$ and $T$ minimal.
Let the left endpoint of $e$ be in column $s$ and the right endpoint in column $t$, $s<t$.  Let row $i$ contain the maximum
of $e$.  Then $e$ is strictly contained between columns $s$ and $t$ (inclusive) and below row $i$ (inclusive).
\label{cor:contained}
\end{cor}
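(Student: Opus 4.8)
The plan is to read the corollary off directly from the structural lemmas just established, treating $e$ as a properly embedded arc in the disk $S$ whose two endpoints lie on the bottom edge in columns $s$ and $t$, and analyzing its turnarounds via the height (row) function and the horizontal (column) function separately.

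First, for the vertical containment I would traverse $e$ from its left endpoint to its right endpoint and track the row index. The height must rise off the bottom and eventually return to it, so $e$ has at least one interior local maximum in height. Each interior local maximum of the height function is exactly a concave-down corner-corner arc and each interior local minimum is a concave-up corner-corner arc. By Lemma~\ref{lem:XXccup} there are no concave-up arcs, so there is no interior local minimum, and by Lemma~\ref{lem:oneccdown} there is at most one concave-down arc, so there is exactly one interior local maximum. Hence the height profile of $e$ is unimodal---rising to a single maximum and then falling---so $e$ lies weakly below the row of that maximum, namely row $i$. This yields containment below row $i$ (inclusive).

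Next, for the horizontal containment I would run the symmetric argument with the column position playing the role of height. The leftmost point of $e$ is either its left endpoint, which sits in column $s$ on the bottom boundary, or an extremal arc in the interior. If $e$ were to meet any column $w<s$, then its leftmost extent would be attained strictly inside $S$ and would therefore be an interior turnaround in the horizontal direction---precisely a corner-corner arc concave to the right. Lemma~\ref{nortorlt} forbids any such arc, so every tile of $e$ lies in a column $\geq s$. The identical reasoning on the right, using concave-to-the-left arcs, shows every tile of $e$ lies in a column $\leq t$. Thus $e$ is contained between columns $s$ and $t$ (inclusive), and combining the two containments places $e$ in the rectangular block of columns $s$ through $t$ and rows (bottom) through $i$, as claimed.

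The step I expect to require the most care is identifying each extremal turnaround of the embedded arc with a corner-corner arc of the correct concavity, so that the earlier lemmas genuinely apply. In particular I must rule out the a priori possibility that a horizontal extreme is realized by a corner-edge arc or along $\partial S$: since $e$ is an XX-edge with both endpoints on the bottom, its only boundary contact is at those two endpoints, which realize the extreme column only when that column is $s$ or $t$, while every turnaround strictly interior to $S$ is a corner-corner arc by the tile-type analysis underlying the definitions of the moves. Once this bookkeeping is in place, the corollary follows immediately by assembling Lemmas~\ref{lem:XXccup}, \ref{lem:oneccdown}, and~\ref{nortorlt}.
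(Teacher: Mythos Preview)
Your argument is correct and is exactly the unpacking the paper intends when it writes ``These lemmas imply'': the horizontal containment follows from Lemma~\ref{nortorlt} just as you describe, since any excursion of $e$ past column $s$ or $t$ would force an interior concave-right or concave-left corner-corner arc. Note only that the vertical containment is a tautology (row $i$ is \emph{defined} to be the maximum row of $e$), so your invocation of Lemmas~\ref{lem:XXccup} and~\ref{lem:oneccdown} there, while not wrong, is more than is needed.
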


We now apply this result to outermost XX-edges to build an argument that they must consist of only two tiles exemplified by edge $e_1$ in Figure~\ref{fig:outermost}.

\begin{lemma}
\label{lem:XXedge}
If $e$ is an outermost XX-edge in minimal mosaic  complement $C$ with $|C| \leq n-4$ then 
$e$ consists of two adjacent tiles in the second layer.  Each of the tiles is either type II or type IV.
\end{lemma}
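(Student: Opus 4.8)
The plan is to use the structural lemmas already proved to pin down the global shape of $e$, and then to invoke the outermost hypothesis twice: first to force the maximum of $e$ down into the second layer, and second to forbid any horizontal tiles in the interior of the resulting cap. Rotate the mosaic so that both endpoints of $e$ lie on the bottom side of $S$, in columns $s<t$. By Lemma~\ref{lem:XXccup} the edge has no concave-up corner-corner arc, by Lemma~\ref{nortorlt} none concave to the left or right, and by Lemma~\ref{lem:oneccdown} at most one that is concave down. Since $e$ leaves the bottom of $S$ and returns to it, it must reverse direction exactly once, so these facts together force $e$ to be a vertical strand rising in column $s$, a single concave-down arc $a$ across its maximum row $i$, and a vertical strand descending in column $t$. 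By Corollary~\ref{cor:contained}, $e$ stays inside the box bounded by columns $s,t$ and row $i$, and its outside is precisely the region $R$ trapped between $e$ and the bottom of $S$ from column $s$ to column $t$.

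I would next show $i=2$. Suppose instead $i>2$. Then $a$ is a genuine concave-down corner-corner arc, and if a corner-corner move could be applied to it then Lemma~\ref{lem:nocc} would contradict the minimality of $T$. Hence the move must be obstructed, so by Lemma~\ref{lemma:nested} there is a nested family of corner-corner arcs occupying every row from $i-1$ down to $2$ beneath $a$. Each of these nested arcs lies inside $R$ and, since $e$ itself carries only the single concave-down arc $a$, belongs to some edge other than $e$. This puts an edge strictly outside $e$, contradicting that $e$ is outermost. Therefore $i=2$, and $e$ is a single cap in the second layer whose left corner $T_{2,s}$ is type IIb or IVb, whose right corner $T_{2,t}$ is type IIa or IVa, and whose intermediate tiles $T_{2,w}$ with $s<w<t$, if any, are all type IIIa.

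It remains to rule out the intermediate tiles, that is, to show $t=s+1$, and this is the step I expect to be the main obstacle. For each interior tile $T_{2,w}$ of type IIIa, the complementary knot tile $R_{2,w}$ is the vertical tile IIIb, so $K$ crosses $e$ once there and dips into the boundary row before reconnecting. Because $e$ is outermost, the region $R$ below the cap contains no other edge of the dual, so nothing obstructs these strands of $K$ from below. The idea is to take the crossing nearest an endpoint of $e$ and slide that strand through the unobstructed region $R$ and along the boundary until it can be lifted off the end of $e$; in the spirit of the corner-corner move this is a planar isotopy of $K$ that removes one type IIIa tile and shortens the cap, strictly lowering $T$ and contradicting minimality. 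Iterating deletes every interior tile. The delicate points I would have to handle are checking that the isotoped strand is redrawn as a legitimate mosaic with all tiles suitably connected, and confirming that removing the outermost crossing genuinely lowers the ordered triple rather than merely trading a type IIIa tile for a type IV tile. Once no interior tiles survive, $e$ occupies exactly the two adjacent tiles $T_{2,s}$ and $T_{2,s+1}$, each necessarily type II or type IV.
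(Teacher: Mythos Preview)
Your reduction of the maximum of $e$ to row $2$ is correct and mirrors the paper's argument: the nested corner-corner arcs supplied by Lemma~\ref{lemma:nested} cannot lie on $e$ itself (Lemma~\ref{lem:oneccdown}) nor on any other edge (outermostness), so $i=2$.  One small remark: the lemmas you cite only forbid full corner--corner arcs in the three forbidden orientations, which does not by itself force the clean $\sqcap$ shape you describe---a monotone staircase would also survive those lemmas.  This is harmless here, since once $i=2$ the horizontal cap in row~$2$ is the only possibility anyway.

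The divergence is in the second step, and the ``delicate points'' you anticipate disappear once you recognise that the isotopy you are sketching is a move already on the shelf.  The paper dispatches the case $t>s+1$ with a single corner-edge move.  In a rotated frame the leftmost tile $T_{2,s}$ is itself a one-tile corner-edge arc: it enters $S$ from the bottom boundary and turns into $T_{2,s+1}$, which is type~IIIa (the rotated analogue of the type~IIIb tile required beneath the corner in the original definition of the move).  The move replaces $T_{2,s}$ by a type~$0$ tile and promotes $T_{2,s+1}$ to the new left corner, so $l'$ (or $l''$, if $T_{2,s}$ was type~IV) drops by one, and Lemma~\ref{lem:noce} gives the contradiction with minimality immediately.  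No iteration is needed, no boundary-tile bookkeeping has to be done by hand, and your worry about ``trading a type~IIIa tile for a type~IV tile'' never materialises: the move converts a corner tile to type~$0$ rather than touching the interior type~IIIa tiles at all.
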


\begin{proof}

Without loss of generality let both endpoints of $e$ be on the bottom edge of $S$.
First we argue that if $e$ is an outermost edge then 
$e$ is totally contained in row 2 and consists of $T_{2,s}$ a type IIb or IVb tile, $T_{2,t}$ a type IIa or IVa tile
and type IIIa tiles $T_{2,w}$ for $s<w<t$ (if $t = s+1$ then this last set of tiles is not used).
Up to rotation, $h$ and $e_1$ are examples of such arcs in Figure~\ref{fig:outermost}.
We then strengthen the result to show that $e$ not only is in the second row, but it contains only two tiles.

Let $a$ be the corner-corner arc of $e$ in row $i$, the highest row that contains a tile of $e$.  If $i=2$ we are done
with the first step of the proof.
If $i>2$ then the row $i+1$ just above $a$  cannot contain any portion of $e$ since it is above the global maximum of $e$ and this dictates that the tiles in row $i+1$ are inside (not outside) of $e$.
In turn this means that the row
$i-1$ just below $a$ must either 
contain part of $e$ or be outside of $e$.  
Since $T$ is minimal, there must be a corner-corner arc $a' \subset C$ below $a$ or we could do a corner-corner move on $a$ pushing it down and reducing $T$.
Then $a'$ cannot be part of an edge other than $e$ since that would imply it was outside of $e$ 
and $e$ is outermost.  Thus $a' \subset e$,
but this contradicts  Lemma~\ref{lem:oneccdown}.  This implies that $a$ is in row 2 and this can only happen if $e=a$ and consists of $T_{2,s}$ a type IIb tile, $T_{2,t}$ a type IIa tile
and $T_{2,w}$ type IIIa tiles for $s<w<t$.

Now we know that $e$ is entirely contained in row 2.
If $e$ contains more than 2 tiles, then the leftmost tile of $e$ can be moved to the right using a corner-edge move to reduce $T$ contradicting minimality.  Thus $e$ must be just 2 tiles long and we are only left with the desired type of outermost XX-edges (see arcs $e_{1},c_{1},c_{2},g_{1}$ and $g_{2}$ in Figure~\ref{fig:outermost}).
\end{proof}

\begin{lemma}
If $e$ is an
XX-edge in a minimal mosaic  complement $C$ with both endpoints on the bottom edge of $S$ and $|C| \leq n-4$ and the maximum of $e$
occurs in column $i$, $i>2$ then there is a set of nested edges 
$\{e_2, e_3 \dots e_{i-1} \}$ outside
of $e$ with the maximum of each $e_j$ in row $j$. 
\label{lem:nestedccarcs}
\end{lemma}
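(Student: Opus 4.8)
The plan is to descend from the single local maximum of $e$ and repeatedly invoke Lemma~\ref{lemma:nested} to manufacture the nested edges one row at a time. First I would record the structure of $e$ itself: since $e$ is an XX-edge with both endpoints on the bottom of $S$, Lemma~\ref{nortorlt} forbids corner-corner arcs concave left or right, Lemma~\ref{lem:XXccup} forbids a concave-up corner-corner arc, and Lemma~\ref{lem:oneccdown} then leaves exactly one corner-corner arc on $e$, necessarily concave down. By hypothesis this unique arc, call it $a$, is the global maximum of $e$ and lies in row $i$ with $i>2$; by Corollary~\ref{cor:contained}, $e$ is trapped between its endpoint columns and below row $i$, so the region \emph{outside} $e$ is exactly the subdisk of $S$ enclosed by $a$ together with the stretch of the bottom edge of $S$ joining the two endpoints.

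Next I would feed $a$ into Lemma~\ref{lemma:nested}. Because $T$ is minimal, that lemma supplies a nested family of corner-corner arcs $\{a_2,a_3,\dots,a_{i-1}\}$ with $a_j$ in row $j$, each sitting below $a$ and within its columns, hence each lying in the region outside $e$, and each a concave-down local maximum. The key observation is that none of these arcs can belong to $e$: by Lemma~\ref{lem:oneccdown} the edge $e$ carries only one concave-down corner-corner arc (namely $a$ in row $i$), so any $a_j$ with $j<i$ belongs to a \emph{different} edge $e_j$ of the dual. The same one-concave-down-arc count, applied to each candidate edge, shows the $a_j$ lie in pairwise distinct edges, so $e_2,\dots,e_{i-1}$ are genuinely distinct.

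I would then pin down each $e_j$. Since $a_j$ lies in the subdisk outside $e$, the edge $e_j$ lies there too, and as the only part of $\partial S$ bounding that subdisk is a portion of the bottom side of $S$, both endpoints of $e_j$ must sit on the bottom; thus $e_j$ is itself a bottom XX-edge, so Lemmas~\ref{lem:XXccup}, \ref{lem:oneccdown} and~\ref{nortorlt} all apply to it. Exactly as for $e$, this forces $e_j$ to have a single corner-corner arc, concave down, which must be $a_j$; hence the maximum of $e_j$ occurs in row $j$. Finally, because $a_{j}$ is nested immediately inside $a_{j+1}$ (and $a_{i-1}$ inside $a$), the enclosed subdisks nest, giving $e_j$ outside $e_{j+1}$ for each $j$ and all of them outside $e$, which is the asserted nested family.

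The main obstacle I anticipate is bookkeeping the nesting and distinctness rigorously: I must be sure that the arcs $a_j$ produced by Lemma~\ref{lemma:nested} are the global maxima of honest, pairwise-distinct edges rather than repeated visits of a single wandering edge, and that each $e_j$ truly has both endpoints on the bottom. Both points rest on the uniqueness in Lemma~\ref{lem:oneccdown} together with the fact that a dual edge cannot terminate on another edge (its endpoints lie on $\partial S$), so the remaining work is to articulate these two facts cleanly rather than to carry out any new computation.
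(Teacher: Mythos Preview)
Your argument is correct and is essentially an expanded version of the paper's one-line proof, which simply invokes the nested corner-corner arcs from Lemma~\ref{lemma:nested} together with the ``one corner-corner arc per edge'' fact from Lemma~\ref{lem:oneccdown}. The only cosmetic issue is an ordering one: you invoke Lemma~\ref{lem:oneccdown} for the distinctness of the $e_j$ before verifying that each $e_j$ is a bottom XX-edge, so you should either swap those two paragraphs or note up front that any edge meeting the region outside $e$ must have both endpoints on the bottom of $S$ (since that is the only piece of $\partial S$ bounding that region and edges of the dual are disjoint).
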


\begin{proof}
We must have nested corner-corner arcs outside of $e$ in each row and each edge has only one corner-corner arc.
\end{proof}

\begin{lemma}
If $e$ is an
XX-edge in minimal mosaic  complement $C$ with both endpoints on the bottom edge of $S$  and $|C| \leq n-4$ and the left endpoint of $e$
occurs in column $s$ at $T_{2,s}$, and $e$ is not outermost, then there is a 
nested set of edges 
$\{e_{s+1}, e_{s+2} \dots e_{k}\}$ outside
of $e$ with the left endpoint of $e_j$ in column $j$ for each 
$j$, $s+1 \leq j \leq k$ 
and $e_k$ an outermost edge in $C$. 
\end{lemma}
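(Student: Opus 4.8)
The plan is to follow the left wall of $e$ inward, using the corner-edge move together with the minimality of $T$ to produce the nested edges one at a time, each shifted exactly one column to the right, until an outermost edge is reached.

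First I would record the ambient structure. Since $e$ has both feet on the bottom edge of $S$, its outside (smaller) disk is the region under the arch, and the only portion of $\partial S$ meeting that disk is the bottom segment between the two feet. Hence every edge lying outside $e$ is again an XX-edge with both feet on the bottom, so all of the structural results of this section --- in particular Lemmas~\ref{lem:XXccup} through~\ref{nortorlt} and Corollary~\ref{cor:contained} --- apply to those edges as well. Because $e$ is not outermost, its outside disk contains at least one edge and therefore $e$ cannot be a two-tile row-2 arch (such an arch has no room beneath it); by the structural lemmas $e$ is then a genuine arch whose left foot sits at $T_{2,s}$ and whose left wall runs up column $s$ to the corner where it first turns to the right.

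The key step is the claim: if $g$ is an XX-edge with both feet on the bottom, left foot in column $c$, and $g$ is not outermost, then there is an edge $g'$ lying inside $g$ whose left foot is in column $c+1$. To prove it I would apply the corner-edge move, in the reflected form that pushes a corner-to-edge column arc to the right, to the segment of $g$ running from its left foot up column $c$ to its first corner. By Lemma~\ref{lem:noce} (with Lemma~\ref{lem:nocc} guaranteeing there is no preempting corner-corner move) minimality forbids this from reducing $T$, so the move must be obstructed by a dual tile immediately to its right, namely in column $c+1$ and inside $g$. That tile belongs to some edge $g'$ in the outside disk of $g$. By Corollary~\ref{cor:contained} the edge $g'$ is contained between its own two feet columns, so its left foot column is at most $c+1$; and since $g'$ lies inside $g$ its feet lie strictly to the right of column $c$, so its left foot column is at least $c+1$. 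Hence the left foot of $g'$ is exactly $T_{2,c+1}$. Iterating the claim starting from $e$ produces $e_{s+1}$, then $e_{s+2}$, and so on, with the left foot of $e_j$ in column $j$; the columns strictly increase and are bounded above by the right foot column $t\le n-1$, so the process terminates, and it can only terminate at an edge $e_k$ with nothing in its outside disk, i.e.\ an outermost edge. This gives the nested family $\{e_{s+1},\dots,e_k\}$ exactly as in the edges $f_1,f_2,f_3,f_4$ of Figure~\ref{fig:outermost}.

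The main obstacle is the key step: I must be sure that the failure of the corner-edge move pins down a \emph{single} neighbouring edge whose left foot is precisely one column over, rather than merely ``some obstruction somewhere.'' This is where Corollary~\ref{cor:contained} does the real work, converting ``an obstructing dual tile lies in column $c+1$'' into ``the obstructing edge has its left foot in column $c+1$.'' I must also verify that the reflected push-right version of the move is genuinely the one covered by Lemma~\ref{lem:noce}, and that since $g$ is not outermost the push is always obstructed (so it yields a nested edge) rather than reducing $T$.
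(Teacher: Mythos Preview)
Your proposal is correct and follows essentially the same approach as the paper: both arguments apply the (rotated) corner-edge move to the left wall of the edge and invoke minimality of $T$ to force an obstructing edge in the adjacent column, then iterate. The paper's proof is a single sentence relying on Lemma~\ref{nortorlt} (no concave-left or concave-right corner-corner arcs) to guarantee the move is available; you flesh this out by explicitly invoking Corollary~\ref{cor:contained} to pin the obstructing edge's left foot to column $c+1$ exactly, which is a genuine clarification of a step the paper leaves implicit. One small terminological slip: you write ``an edge $g'$ lying inside $g$'' where you mean an edge in the outside disk of $g$ (as you correctly say two lines later); the paper's convention is that the smaller disk is the \emph{outside}, so keep that consistent.
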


\begin{proof}
Because the XX-edges contain no corner-corner arcs that are concave left
or right we can always use a corner-edge move to reduce $T$ unless there is an edge with its endpoint in the adjacent column blocking the move.  
\end{proof}

\begin{lemma}
\label{XXtypeII}
If $e$ is an
XX-edge in $C$ and $|C| \leq n-4$ and all the edges outside of $e$ are nested with each other,
and $e$ contains a type II tile, then $C$ is not minimal.
\end{lemma}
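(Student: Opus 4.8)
The plan is to reduce $T$ by a corner conversion (Lemma~\ref{lem:loop2}) anchored at the type II tile, contradicting minimality: in a minimal dual the relocating moves (corner-corner, corner-edge, bubble release, XX-through-XY) are already exhausted, so the only remaining way to lower $T$ is a corner conversion, which needs exactly a type II tile. First I would pin down the shape of $e$. Rotating the mosaic, assume both endpoints lie on the bottom of $S$ in columns $s<t$. By Lemmas~\ref{lem:XXccup}, \ref{lem:oneccdown} and~\ref{nortorlt} together with Corollary~\ref{cor:contained}, $e$ has exactly one corner-corner arc $a$ and it is concave down, so $e$ is the usual ``mountain'': type III tiles running up columns $s$ and $t$, type IIIa tiles across the top, and two corner tiles $T_{i,s},T_{i,t}$ at the maximum in some row $i$. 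Every non-corner tile of $e$ is type III, so the hypothesized type II tile must be one of these two corners; by reflective symmetry I may take it to be $T_{i,t}$. If $e$ is outermost, Lemma~\ref{lem:XXedge} gives $i=2$ and $e=\{T_{2,s},T_{2,t}\}$; otherwise all edges in the outside disk of $e$ are nested, giving that disk a clean onion structure.

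Next I would run the corner conversion at $T_{i,t}$. Choosing the undercrossing at every intersection of $e$, and of each nested edge inside its outside disk, with $K$ routes these dual arcs wholly beneath $K$; this is consistent because, by the definition of the dual, dual arcs never meet themselves or one another, and the nesting prevents under-routing conflicts. Closing $e$ through its two endpoints on $\partial S$ produces a loop $\gamma$ bounding the outside disk; I then convert $T_{i,t}$ into a type V crossing in the knot and absorb $\gamma$ into $K$ exactly as in Lemma~\ref{lem:loop2}. Because $\gamma$ lies entirely under $K$ and bounds a disk, it is split off by a connect sum with an unknot, so the new mosaic again represents $K$. The bookkeeping is the crux: the tile $T_{i,t}$ leaves the dual, dropping $|D|$, while the side and top tiles of $e$ become at worst type 0 or type IV tiles, which can only weakly decrease $l'$ and $l''$. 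Hence $T$ strictly decreases and $D$ is not minimal.

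The hard part will be making this closure-and-absorption rigorous, rather than merely relocating crossings or changing the knot type. Two points need care: (i) the endpoints of $e$ lie on $\partial S$, so I must verify that closing $e$ into $\gamma$ uses only the boundary strands adjacent to columns $s$ and $t$ and yields an unknot that is unlinked from $K$; and (ii) the outside disk of $e$ may also contain type 0 tiles and the inner nested edges, which must be shown not to obstruct the conversion. This is exactly where the hypothesis that all edges outside $e$ are nested is used: the onion structure lets $e$ be pushed under $K$ and absorbed without entangling the inner edges, so the split-off component is a genuine unknot and the net effect on the triple $T$ is a strict decrease.
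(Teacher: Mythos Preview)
Your argument captures only one of the two cases in the paper's proof, and the missing case is not a detail to be ``verified'' but requires a different move altogether.

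You close $e$ into a loop $\gamma$ by routing through the boundary row (row~1) between columns $s$ and $t$, then run a corner conversion at the type~II corner. This is precisely the paper's \emph{second} case, valid only when $K$ does not occupy the row~1 tiles beneath $e$. You never check that those tiles are free. If the arcs of $K$ in $S$ are connected using row~1 under $e$ (the paper's first case), you cannot lay down the closing arc: boundary tiles cannot be type~V, so the closing arc and $K$ cannot cross there, and a horizontal $K$-arc already sitting in $R_{1,j}$ leaves no room for a second one. Your item~(i) flags exactly this obstruction but does not resolve it.

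The paper handles that first case by abandoning $e$ and working instead with the \emph{outermost} nested edge $e'$, which by Lemma~\ref{lem:XXedge} is two adjacent tiles in row~2. Since $K$ runs through row~1 directly beneath $e'$, one reroutes $K$ up through $e'$ --- a planar isotopy absorbing $e'$ into $K$ and dropping $|D|$. Note that this move never uses the type~II tile of $e$; the type~II hypothesis is invoked only in the second case. The nesting hypothesis is what guarantees $e'$ sits where it is needed.

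Two smaller remarks. Routing the nested edges under $K$ is irrelevant: they remain in the dual throughout the corner conversion and do not interact with $\gamma$. And your bookkeeping understates the drop: once $e$ is moved into the knot with undercrossings, each type~III tile of $e$ becomes type~V in the knot and hence type~I in the dual, so $|D|$ itself falls by the full length of $e$, not merely $l'$ or $l''$.
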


\begin{proof}
Examine the subset of $M_n$ representing $K$.  If the arcs of $K$ contained in $S$ are connected using the tiles 
under $e$ in row 1, as it does in the top mosaic in Figure~\ref{fig:outermostconnect}, then the edge in row 1 also runs under an outermost edge $e'$ outside of $e$ ($e=e'$ if $e$ is outermost).
We can add $e'$ to $K$, connecting it up to the original knot 
as in the bottom picture in Figure~\ref{fig:outermostconnect}, and obtain a knot isotopic to $K$,
but we have reduced the ordered triple contradicting minimality.

If $K$ does not use the tiles in row 1 under $e$, then
$K$ is disjoint from all of the tiles in row 1 between
the endpoints of $e$.  If necessary change crossings between the knot and mosaic  complement -- but not the knot with itself, thus not changing the knot at all -- to make sure $e$ always goes under $K$, and connect $e$ to itself through row $2$ giving a loop.  Remove the loop from the mosaic  complement and add it to the mosaic creating a link mosaic with two components, $K$ and an unknot. This takes us to the middle picture in Figure~\ref{fig:typeIIconnect}.  Then use a corner conversion by placing a type V crossing tile where the
 type II tile of $e$ had been 
going from the middle to the bottom picture in Figure~\ref{fig:typeIIconnect}.  As in the proof of Lemma~\ref{lem:loop2}, a corner conversion takes the connect sum of $K$ with an unknot giving another version of $K$ on an $n \times n$ mosaic, but with a reduced ordered triple contradicting the minimality of $T$.
\end{proof}

\begin{figure}[htp]
\centering
\includegraphics[scale=.6]{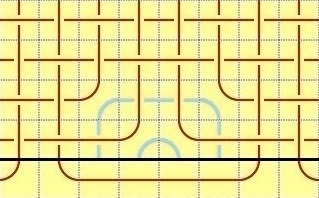}
\hspace{.2in}
\includegraphics[scale=.6]{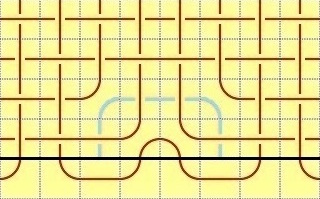}
\caption{If $K$ passes outside of an outermost arc of the mosaic  complement, the move pictured shows the mosaic  complement is not reduced.}
\label{fig:outermostconnect}
\end{figure}

\begin{figure}[htp]
\centering
\includegraphics[scale=.45]{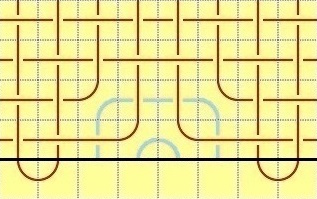}
\hspace{.1in}
\includegraphics[scale=.45]{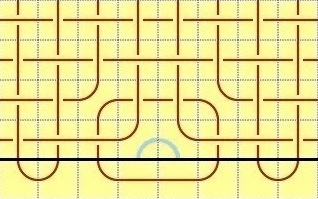}
\hspace{.1in}
\includegraphics[scale=.45]{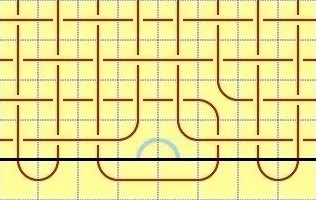}
\caption{If a nested XX-edge in the mosaic  complement has a type II tile in the mosaic  complement
and $K$ does not pass outside the edge, we can alter the mosaic  complement reducing $T$.}
\label{fig:typeIIconnect}
\end{figure}

\begin{lemma}
If $e$ is an
XX-edge in a minimal mosaic  complement $C$ and $|C| \leq n-4$ then all the edges outside of $e$ are nested with each other.
\end{lemma}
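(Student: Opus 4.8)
The plan is to argue by contradiction, exploiting the fact that distinct arcs of the dual are disjoint embedded arcs and hence never cross. First I would normalize the picture: rotate so that both feet of $e$ lie on the bottom edge of $S$, say in columns $s<t$, so that the outside of $e$ is the disk $R$ bounded by the arch $e$ together with the segment of the bottom edge of $S$ between columns $s$ and $t$. Every edge outside $e$ lies in $R$ and, since its endpoints must sit on $\partial S\cap\partial R$, is itself an XX-edge with both feet on that bottom segment. By Lemmas~\ref{lem:XXccup}, \ref{lem:oneccdown} and~\ref{nortorlt} together with Corollary~\ref{cor:contained}, each such edge is a simple arch, contained between its own feet, with a single concave-down corner-corner arc at its top. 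Because two arcs of $D$ never cross, no two of these arches can have interleaved feet, so their foot-intervals on the bottom segment are pairwise nested or disjoint. Thus ``all edges outside $e$ are nested with each other'' is equivalent to the assertion that no two edges outside $e$ have disjoint spans, and this is what I must establish.

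Suppose, for contradiction, that two edges outside $e$ have disjoint spans. Since $D$ has no loops (Corollary~\ref{cor:noloops}), within the outside disk of each of these edges there is an outermost edge of $D$; because an outermost-within-that-disk edge has its own outside disk free of edges, it is outermost in all of $D$. These two outermost edges again have disjoint spans and both lie outside $e$, so I may assume from the start that I have two outermost edges $o_1$ (on the left) and $o_2$ (on the right), both outside $e$, with disjoint spans. By Lemma~\ref{lem:XXedge} each of $o_1,o_2$ consists of two adjacent tiles in row $2$, each tile being of type II or type IV. Now apply Lemma~\ref{XXtypeII} to $o_1$: its hypotheses hold, for $o_1$ is outermost, so there are no edges outside it and the nesting condition is vacuous. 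Hence if $o_1$ contained a type II tile then $D$ would fail to be minimal, a contradiction; the same applies to $o_2$. Therefore both $o_1$ and $o_2$ must be built entirely from type IV (double-arc) tiles.

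It remains to rule out two side-by-side outermost arches made entirely of type IV tiles, and this is the step I expect to be the main obstacle. Here I would use the second arc carried by each double-arc tile. First I would slide the two arches together, using corner-edge moves on any longer neighbors and percolation (Lemma~\ref{lem:percolate}) to normalize the type IV tiles between and around $o_1$ and $o_2$, so that the two arches abut in adjacent columns. The abutting pair of type IV tiles then has its two ``inner'' arcs meeting along the shared edge, and a short case check on the possible orientations of the adjacent double-arc tiles shows that these arcs either close up into a bubble, contradicting Theorem~\ref{thm:burst}, or present a configuration on which a bubble release is possible, contradicting Lemma~\ref{lem:norelease}. Either outcome contradicts the minimality of $T$, so no two edges outside $e$ can have disjoint spans, and all edges outside $e$ are nested with one another. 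The genuine work lies in this last type IV analysis: one must verify that after percolation every orientation of the abutting double-arc tiles is either already forbidden or admits a $T$-reducing move, all carried out without disturbing $e$ itself.
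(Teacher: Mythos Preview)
Your setup through the point where $o_1$ and $o_2$ are forced to be two-tile arches of type~IV tiles is fine: passing to outermost edges with disjoint spans is legitimate, Lemma~\ref{lem:XXedge} gives the two-tile form, and Lemma~\ref{XXtypeII} applies vacuously to an outermost edge. The gap is your final step. The plan to ``slide the two arches together'' is not supported by the tools available: percolation (Lemma~\ref{lem:percolate}) requires a contiguous $2\times k$ block of type~IV tiles, which you have not produced between $o_1$ and $o_2$; and any corner-edge move you could actually perform would already contradict minimality directly, so it cannot be used as a rearranging device. Without a concrete mechanism to force adjacency, the bubble/bubble-release case analysis never gets off the ground, and the argument is incomplete exactly where you flag it as ``the genuine work.''

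The paper avoids this endgame entirely by choosing $e$ differently. Instead of taking $e$ as given and descending to the outermost layer, it first replaces $e$ by the \emph{outermost} XX-edge whose outside is not totally nested, and then picks $e_1,e_2$ to be an \emph{innermost} non-nested pair outside that $e$, so that $e_1$ and $e_2$ lie just outside $e$ with nothing in between. By the extremal choice of $e$, everything outside each $e_i$ is nested, so Lemma~\ref{XXtypeII} still forces their top corners to be type~IV; but now those type~IV tiles are shared with $e$ itself. The second arcs in those tiles therefore belong to $e$, and the portion of $e$ joining the top of $e_1$ to the top of $e_2$ is a concave-up corner-corner arc, contradicting Lemma~\ref{lem:XXccup}. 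The key idea you are missing is to work at the interface with $e$ rather than at the outermost layer; once you go all the way out to $o_1,o_2$, you lose the direct link to $e$ and are forced into the ad~hoc manoeuvres you describe.
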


\begin{proof}
If not, then examine the outermost edge $e$ which has edges outside of it which are not nested with each other. Let $e_1$ and $e_2$ 
be the innermost non-nested edges outside of $e$
(so $e_1$ is not outside of $e_2$ and vice versa and $e_1$ and $e_2$ are just outside of $e$). Up to rotation, this situation is depicted by arcs $g_{1},g_{2},g_{3}$ and by $c_{1},c_{2},c_{3}$ in Figure~\ref{fig:outermost}. 
None of the corners at the top of $e_1 \cup e_2$ can be type II or $T$ is not minimal by the previous lemma, but if they are all type IV tiles then $e$ has a corner-corner arc that is concave up contradicting Lemma~\ref{lem:XXccup}.
\end{proof}

\begin{figure}[htp]
\centering
\begin{tikzpicture}
  \node at (0,0) {\includegraphics[scale=.7]{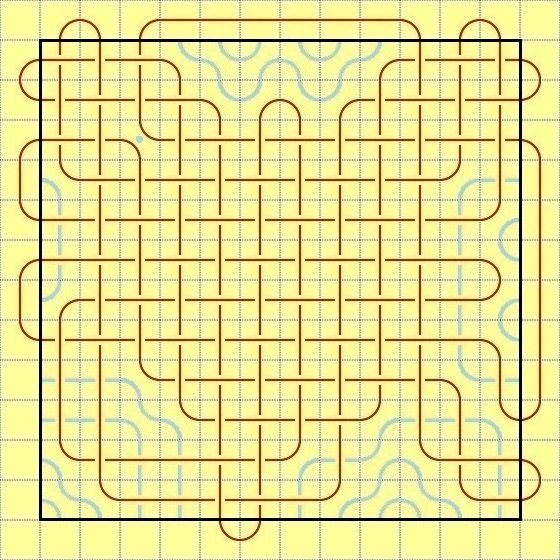}};
  \node at (-4.05,-4.8) {$f_{1}$};
  \node at (-3.32,-4.8) {$f_{2}$};
  \node at (-2.6,-4.8) {$f_{3}$};
  \node at (-1.85,-4.8) {$f_{4}$};
  \node at (2.7,-4.8) {$e_{1}$};
  \node at (3.4,-4.8) {$e_{2}$};
  \node at (4.1,-4.8) {$e_{3}$};
  \node at (-4.65,-.38) {$h$};
  \node at (-1.85,4.6) {$c_{1}$};
  \node at (-1.12,4.6) {$c_{2}$};
  \node at (.43,4.6) {$c_{3}$};
  \node at (4.67,1.85) {$g_{1}$};
  \node at (4.67,.35) {$g_{2}$};
  \node at (4.67,-1.15) {$g_{3}$};
\end{tikzpicture}
\caption{An example of a knot mosaic and its mosaic  complement.}
\label{fig:outermost}
\end{figure}

These lemmas imply

\begin{thm}
$C$ contains no XX-edges containing a type II tile if $C$ is chosen minimally.
\label{thm:noxx}
\end{thm}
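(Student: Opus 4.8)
The plan is to prove this by contradiction, assembling the two lemmas that immediately precede the statement; essentially all of the structural work has already been done, so the argument is a short combination rather than a fresh construction. Throughout I retain the standing hypothesis $|D| \leq n-4$ that governs this section, since every lemma I intend to invoke carries it.

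First I would suppose, for contradiction, that the minimally chosen dual $D$ contains an XX-edge $e$ that includes a type II tile. The first step is to control the edges lying outside of $e$: because $D$ is minimal and $e$ is an XX-edge with $|D| \leq n-4$, the immediately preceding lemma guarantees that all of the edges outside of $e$ are nested with one another. This is exactly the configuration hypothesis needed downstream. The second step is then to apply Lemma~\ref{XXtypeII} directly, noting that all of its hypotheses are now in force: $e$ is an XX-edge in $D$, we have $|D| \leq n-4$, every edge outside $e$ is nested with the others, and $e$ contains a type II tile. That lemma concludes that $D$ is not minimal, contradicting our choice of $D$. Hence no XX-edge of a minimal dual can carry a type II tile, which is precisely the assertion.

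I do not expect any genuine obstacle at this stage, because the delicate reasoning was discharged in the earlier results — the corner-corner and corner-edge reduction moves, the outermost/nesting structure theory, and the corner-conversion argument inside Lemma~\ref{XXtypeII}. The only point that requires care is bookkeeping: verifying that the conclusion of the preceding lemma (``all edges outside of $e$ are nested with each other'') is verbatim the configuration assumed as a hypothesis of Lemma~\ref{XXtypeII}, and that the bound $|D| \leq n-4$ is simultaneously available to both. Once those alignments are checked, the contradiction is immediate and the theorem follows. If one wished to be slightly more self-contained, one could also remark that Lemma~\ref{lem:XXedge} pins down the shape of outermost XX-edges, reassuring us that the type II tile we are assuming to exist genuinely sits in an edge to which this machinery applies; but this is not strictly necessary for the contradiction.
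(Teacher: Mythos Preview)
Your proposal is correct and matches the paper's approach exactly: the paper simply writes ``These lemmas imply'' before stating the theorem, meaning it is taken as an immediate consequence of the preceding nesting lemma together with Lemma~\ref{XXtypeII}, which is precisely the two-step contradiction you spell out.
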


Now that we know that all the corners in an XX-edge are type IV we exploit this fact to get rid of all XX-edges.

\begin{theorem} If $|C| \leq n-4$ and
$C$ is chosen minimally, then $C$ contains no XX-edges.
\label{thm:noXXedges}
\end{theorem}

\begin{proof}

If $C$ does contain an XX-edge then there is at least one
that is not outside of any of the other XX-edges -- the innermost edge from any of the nested sequences would suffice.  Call that edge $e$.
Without loss of generality let $e$ have both end points on the bottom of $S$, specifically in tiles $T_{2,s}$ and $T_{2,t}$ with $s<t$.
We know $e$ has no type II tiles by Theorem~\ref{thm:noxx}, but it must contain two type IV tiles at its maximum.  Let $f$ be an edge that meets $e$ in one of these type IV tiles. Note that the tile coming from a maximum for $e$ implies that $f$ is not outside of $e$.  Since $e$ has endpoints on the bottom of $S$
and $|C| \leq n-4$ we know $f$ cannot have an end point on the top edge of $S$.  If there is a second edge $g$ that shares the other type IV tile from the maximum of $e$, it cannot be the case that one of these edges had an end point on the right side of $S$ and the other on the left side, since such an edge would stretch across $S$, forcing $|C|\geq n - 2$. 
So without loss of generality we may assume that the end points of $f$ and $g$ are contained in at most the left and bottom sides of $S$.  

We next argue that both end points of $f$ (or $g$)
cannot just be on the bottom of $S$.  If $f$ has both end points on the bottom it is by definition an XX-edge, but recall that 
we chose $e$ so it was not outside of any XX-edges so we know that $e$ is not outside of $f$.  As a result both end points of $f$ have to be contained in rows that are on the same side of rows containing the end points of $e$.  In 
particular $f$ must have end points in tiles $T_{2,u}$ and $T_{2,v}$ with $u<v$ since they are on the bottom of $S$, but then since the two edges are not nested and do not intersect we must have $u<v<s<t$ or $s<t<u<v$.  Either way 
by Corollary~\ref{cor:contained}
$f$ never intersects any of the columns between $s$ and $t$ and $e$ never leaves these columns so they  cannot contain a common type IV tile.  This implies that $f$ does not have both end points on the bottom of $S$.  

Now without loss of generality $f$ (and any edge sharing a type IV tile with $e$) either has both end points on the left side of $S$ if it is an XX-edge or one on the left side of $S$ and the other on the bottom side if it is an XY-edge. In the latter case, since $e$ is not outside of $f$ we also know that the end point on the bottom of $S$ is in some tile $T_{2,u}$ with $u < s < t$.   

Examine again the top right type IV tile from the maximum of $e$ and the edge $f$ that shares this tile. The portion of $f$ within this tile looks like a IIc tile.  Because both of its end points are to the left of $e$ and it does not intersect $e$ we know $f$ 
must contain a concave up corner-corner arc to
contain this tile.
This, however, contradicts Lemma~\ref{lem:XXccup}.

Thus $e$ cannot exist so there are no XX-edges in the mosaic  complement.
\end{proof}

From here we proceed by eliminating XY-edges.

\begin{theorem} If $|C| \leq n-4$ and
$C$ is chosen minimally, then $C$ contains no XY-edges.
\end{theorem}

\begin{proof}

We know all edges must be XY-edges, and that if the set of edges is nonempty then at least one of the XY-edges contains a type II tile by Lemma~\ref{lem:XXedgetype2}.

Let $e$ be such an XY-edge.  We can rotate the entire mosaic if necessary until it encloses the bottom left corner, so let $e$ have one endpoint in tile $T_{s,2}$ on the left edge of $S$ and the other in
 $T_{2,t}$ on the bottom edge of $S$ enclosing the bottom left corner on its outside.  
First observe that $e$ cannot contain a corner-corner arc. 
By Lemma~\ref{lem:nestedccarcs}, any corner-corner arc requires a set of nested corner-corner arcs terminating in an XX-edge on the boundary of $S$.  However by Theorem~\ref{thm:noXXedges}, $C$ contains no XX-edges so this is impossible.

The lack of any corner-corner arcs implies that $e$ is completely contained in the rows below row $s$ (inclusive) and to the left of column $t$ (inclusive).
In turn this implies that if
$s > 2$ then there must be a set of nested XY-edges $\{e_2, e_3 \dots e_w \dots e_{s-1} \}$ for each $w$, $2 \leq w < s$.  If not we could apply a corner-edge move to reduce $T$.  The analogous result holds for the endpoints on the bottom of $S$ with respect to the columns.  

Now we close the argument in the same manner as before.  The outermost XY-edge must be a type II tile or one of the arcs from a type IV tile in $T_{2,2}$. The next most outermost has one endpoint in tile $T_{2,3}$ and the other in $T_{3,2}$ etc.  
Examine the tiles representing $K$.
If $K$ runs through row 1 and column 1 past the ends of these arcs, then we add the outermost XY-edge to $K$ and connect it up to give a planar isotopy of $K$ and reducing $T$, contradicting minimality.  If not, recall that $e$ contains a type II tile. 
Since $K$ does not go through the tiles in row 1 or column 1 outside of $e$
we can turn $e$ into an unknot by
hooking the endpoints of $e$ to itself through these tiles.  
There is no effect on $K$ if we assume that $e$ always passes under $K$.  As in previous theorems 
we use a corner conversion to replace the hypothesized type II tile from $e$ 
with a type V tile to connect sum the new unknot with $K$, yielding another embedding of a knot isotopic to $K$ for which $T$ has decreased contradicting minimality.  Thus there can be no XY-edges.
\end{proof}

Since the mosaic  complement contains no XX-edges, no XY-edges and no loops, we may now conclude the following Corollary.

\begin{cor}
If $M_n=M_{2k}$ is an even knot mosaic yielding knot $K$, with minimal mosaic  complement $C$ and $|C| \leq n-4$,
 then $l'=|C'|=0$. 
Therefore we may assume $C$ consists exclusively of type 0 tiles.
 \label{cor:type0}
\end{cor}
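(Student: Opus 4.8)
The plan is to read off the corollary from the structural description of the dual together with the three elimination results established above. Recall that every component of $D$ is a loop, a type 0 tile, or an edge (an arc with both endpoints on $\partial S$), and that $D'$ is by definition $D$ with all of its type 0 tiles removed. Thus $D'$ is exactly the union of the loops and edges of $D$, and to prove $l'=|D'|=0$ it suffices to show that a minimal dual with $|D|\le n-4$ has no loops and no edges.

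First I would note that a minimal dual contains no loops: this is built into the definition of a minimal embedding and is justified by Corollary~\ref{cor:noloops}. To dispose of edges I would recall the earlier lemma that, when $|D|\le n-4$, no edge of $D$ runs from one side of $S$ to the opposite side, since such an edge would have length at least $n-2$. Hence the two endpoints of any edge lie either on the same side of $S$ (an XX-edge) or on two adjacent sides (an XY-edge); there is no third possibility. This dichotomy is the one place the hypothesis $|D|\le n-4$ is genuinely used, and it reduces the classification of edges exactly to the two cases already treated.

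Applying the two preceding theorems---that a minimally chosen dual with $|D|\le n-4$ contains no XX-edges and no XY-edges---we conclude that $D$ has no edges at all. Together with the absence of loops this gives $D'=\emptyset$, so $l'=0$ and every remaining tile of $D$ is a type 0 tile, which is the assertion of the corollary. I expect no real obstacle here: the statement is a bookkeeping consequence of the accumulated lemmas and theorems, and the only point worth checking is that the loop-free requirement and the two edge-elimination theorems can all be imposed on a single minimal dual, which holds since each is phrased under the same minimality and $|D|\le n-4$ hypotheses.
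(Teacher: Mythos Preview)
Your proposal is correct and follows essentially the same route as the paper: the corollary is stated immediately after the two theorems eliminating XX- and XY-edges with the one-line justification ``Since the dual contains no XX-edges, no XY-edges and no loops,'' and you have simply spelled out that deduction in full, including the observation that $|D|\le n-4$ rules out edges between opposite sides so that the XX/XY dichotomy is exhaustive.
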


\section{Proof of the main result for even boards}

We use Corollary~\ref{cor:type0} to show our main theorem below.

\begin{theorem}
If $M_n=M_{2k}$ is an even knot mosaic yielding knot $K$, then the crossing number of $K$ is less than $(n-2)^2 - (n-4)$.

\label{thm:evenub}
\end{theorem}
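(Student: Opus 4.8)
The plan is to reduce the entire theorem to the single inequality $|D| \ge n-3$ for a minimal dual. First I would fix a minimal embedding of $K$ on an $n \times n$ board, i.e. one minimizing the ordered triple $T$; such an embedding exists because $K$ is representable on $M_n$. In this embedding the crossings of the projection are exactly the type V tiles of $S$, and a tile of $S$ is type V precisely when its dual tile is the blank type I tile. Hence the number of crossings equals $(n-2)^2 - |D|$, since $|D|$ counts the non-blank dual tiles. Because the crossing number $c$ of $K$ is at most the number of crossings in any diagram of $K$, this gives $c \le (n-2)^2 - |D|$ immediately. It therefore suffices to prove $|D| \ge n-3$, for then $c \le (n-2)^2 - (n-3) < (n-2)^2 - (n-4)$, which is the strict bound claimed.

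To establish $|D| \ge n-3$ I would argue by contradiction, supposing instead that $|D| \le n-4$. Then Corollary~\ref{cor:type0} applies and allows me to assume $D$ consists exclusively of type 0 tiles, so that $l' = 0$. By the definition of the dual a type 0 dual tile comes from a type IV tile of the mosaic, and, as observed in the discussion of $D'$ (and illustrated in Figure~\ref{fig:m6}), this exhibits $M_n$ as an even saturated mosaic in which exactly $|D|$ crossings have been smoothed, one per type 0 tile.

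The heart of the argument is a component count. Un-smoothing all $|D|$ type IV tiles back to crossings produces the saturated even link mosaic $L$ underlying $D$. By the even saturated analysis used in Theorem~\ref{thm:saturated} (see Figure~\ref{fig:evenlink}), the link-giving connection of a saturated $2k$-mosaic has exactly $n-2$ components. Now $K$ is recovered from $L$ by performing the $|D|$ smoothings recorded by the type 0 tiles, and a single smoothing is a purely local modification that changes the number of components of a diagram by exactly one. Consequently the number of components of $K$ is at least $(n-2) - |D| \ge (n-2) - (n-4) = 2$, contradicting the fact that $K$ is a knot. This contradiction forces $|D| \ge n-3$ and completes the proof.

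The step I expect to be the main obstacle is the component count, and in particular verifying that the saturated mosaic underlying a minimal all-type-0 dual realizes the connection with $n-2$ components rather than the alternative nugatory-corner connection from the proof of Theorem~\ref{thm:saturated}. I would handle this by checking that the boundary tiles forced by an interior consisting only of type IV and type V tiles are exactly those of Figure~\ref{fig:evenlink}, and by invoking minimality of $T$ to exclude the nugatory-corner connection, since such corners could be removed to lower $T$. The only other point needing care is the claim that each smoothing alters the component number by precisely one, which follows from the standard local analysis of resolving a crossing and makes the inequality $\mathrm{comp}(K) \ge (n-2) - |D|$ unambiguous.
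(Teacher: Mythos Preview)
Your overall architecture matches the paper's: reduce to $|D|\le n-4$, apply Corollary~\ref{cor:type0} to get an all-type-0 dual, view the mosaic as a saturated even board with $|D|$ crossings smoothed, and count components. The gap is precisely at the point you flag as the main obstacle, and your proposed fix does not work.

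First, the boundary tiles are \emph{not} forced by an interior consisting only of type IV and type V tiles. Every interior edge carries a strand in either case, so exactly the two boundary connections described in the proof of Theorem~\ref{thm:saturated} remain available: one yields $n-2$ components (Figure~\ref{fig:evenlink}), the other yields $n-3$ components together with four nugatory corner crossings. Second, minimality of $T$ does \emph{not} exclude the nugatory-corner connection: removing a nugatory corner crossing by a Reidemeister~I move converts a type~V mosaic tile to type~IV, i.e.\ a blank type~I dual tile to a type~0 dual tile, which \emph{increases} $|D|$ and hence increases $T$ lexicographically. So neither of your two proposed justifications goes through.

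In fact the stronger claim $|D|\ge n-3$ that you aim for is false in general: with the $(n-3)$-component connection one can smooth $n-4$ crossings and obtain a genuine knot, so $|D|=n-4$ occurs. The paper handles this second case not by a component count but by observing that the four nugatory corners survive (none of the $n-4$ smoothings can be in a corner, since smoothing a corner fails to lower the component count), so the crossing number is at most $(n-2)^2-(n-4)-4$, which is still strictly below $(n-2)^2-(n-4)$. You need to add this branch to your argument.
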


\begin{proof}

A mosaic with $|C|=l$ and 
$C'= \emptyset$ has $l$ type 0 tiles in the mosaic  complement and nothing else.  It therefore 
is obtained from a saturated mosaic by
smoothing $l$ crossings.  In 
the language of tiles, we are
replacing $l$ type V tiles in the link with $l$ type IV tiles.
Each time this is done the number of components in the mosaic changes by at most one.

If $l > n-4$ then $K$ has at most $(n-2)^2-(n-5)$ crossings failing to exceed our bound.  
As we saw in the section on saturated mosaics, a
saturated even mosaic has $n-2$ or $n-3$ components depending on how the arcs contained in $S$ are connected in the boundary tiles of the mosaic.  
Since we are only smoothing $l$ crossings, we see that if $l < n-4$ this leaves at least 2 components and we did not really have a knot mosaic. 
In this context we insist that we are left with a knot,
that $l \leq n-4$ and also that $l \geq n-4$ so it must be that $l=n-4$.

If the saturated mosaic starts with $n-2$ components,
smoothing $n-4$ crossings 
still leaves at least 2 components, so to avoid a contradiction the original saturated mosaic must have been connected to yield
 $n-3$ components.  This, however, can only happen when we also have nugatory crossings in each of the four corners of the mosaic.  
If the $n-4$ type 0 tiles yield a knot then 
none of them are in a corner of $S$ as 
smoothing one of these crossings
fails to lower the number of components in the link.
This means that the knot that results from smoothing $n-4$ crossings
will have $(n-2)^2-(n-4)$ crossings, but it also still has the  
4 trivial loops in the corners that can be removed with type I Reidemeister moves.
Thus $K$ could be embedded with 4 fewer crossings, showing its crossing number is at most $(n-2)^2 - (n-4)-4 < (n-2)^2 - (n-4)$ so this knot does not exceed our bound on crossing number on even mosaics.    Therefore a knot mosaic on an even board cannot have crossing number greater than or equal to $(n-2)^2 - (n-4)$. 
\end{proof}

 Note that the trefoil establishes that this bound is sharp since it is achievable on $M_4$ showing a knot of crossing number 3 can be built on a $4 \times 4$ board, but our bound says that we cannot have a knot of crossing number 4 on such a board.

%
%\begin{thm}
%\label{thm:evenub}
%Given a knot K, let $c$ be the crossing number and $m$ be the mosaic number, and $m = 2k$. Then $c \leq (m - 2)^{2} - (m - 3)$.
%\end{thm}

The primary goal of this paper is to refine existing upper bounds for crossing number. Theorems~\ref{thm:oddub} and~\ref{thm:evenub} together establish the following upper bound for crossing number given mosaic number.

\begin{thm}[New Upper Bound for Crossing Number]
\label{thm:newub}
Given an $m$-mosaic and any knot $K$ that is projected onto the mosaic, the crossing number $c$ of $K$ is bounded above by the following:\\
\begin{equation*}
  c \leq
  \begin{cases}
    (m - 2)^{2} - 2 & \quad \text{if $m = 2k + 1$}\\
    (m - 2)^{2} - (m - 3) & \quad \text{if $m = 2k$.}
  \end{cases}
\end{equation*}
\end{thm}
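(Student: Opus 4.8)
The plan is to obtain Theorem~\ref{thm:newub} as an immediate consequence of the two parity-specific bounds already established: Theorem~\ref{thm:oddub} for odd boards and Theorem~\ref{thm:evenub} for even boards. Both of these are proved for a knot carried by a board of a fixed size and parity (the proofs work with the board size $n$ directly, not with the mosaic number), so no identification of $m$ with the mosaic number is needed. Accordingly, I would split into two cases according to the parity of $m$ and check that each reduces, after a small arithmetic normalization, to the corresponding earlier theorem.

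For the odd case, write $m = 2k+1$. Here there is essentially nothing to do: Theorem~\ref{thm:oddub} already gives $c \leq (m-2)^2 - 2$ for any knot projected onto an odd $m$-mosaic, which is exactly the first line of the piecewise bound. I would also recall, as noted just after Theorem~\ref{thm:oddub}, that a saturated odd mosaic whose type~V crossings are chosen to produce an alternating diagram realizes this value, so the odd bound is sharp.

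For the even case, write $m = 2k$. Theorem~\ref{thm:evenub} provides the strict inequality $c < (m-2)^2 - (m-4)$. The one genuine step is to convert this strict bound into the stated non-strict one. Since the crossing number satisfies $c \in \mathbb{Z}_{\geq 0}$ and the quantity $(m-2)^2 - (m-4)$ is an integer, strictness forces $c \leq (m-2)^2 - (m-4) - 1$, and the identity $(m-2)^2 - (m-4) - 1 = (m-2)^2 - (m-3)$ yields precisely the second line of the piecewise bound. Sharpness follows from the trefoil on $M_4$, which attains $c = 3 = (4-2)^2 - (4-3)$.

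The main point to watch is bookkeeping rather than any real obstacle: all the difficulty lives in the two input theorems, and in particular in Theorem~\ref{thm:evenub}, whose proof rests on the full dual-reduction program (no loops by Corollary~\ref{cor:noloops} and Theorem~\ref{thm:burst}, elimination of XX- and XY-edges, and hence $D$ consisting only of type~0 tiles by Corollary~\ref{cor:type0}). Granting those, the only things that can go wrong here are mismatching the parity-dependent constants or omitting the integrality step in the even case, so I would state explicitly that $c$ is a nonnegative integer and that the right-hand side of the even bound is an integer before tightening the strict inequality by one.
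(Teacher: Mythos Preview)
Your proposal is correct and matches the paper's own treatment: the paper simply states that Theorems~\ref{thm:oddub} and~\ref{thm:evenub} together establish Theorem~\ref{thm:newub}, without even writing out the proof. Your explicit integrality step converting the strict inequality $c < (m-2)^2 - (m-4)$ of Theorem~\ref{thm:evenub} into $c \leq (m-2)^2 - (m-3)$, and your observation that the proofs of the input theorems work for any board of the given parity (not just when $m$ equals the mosaic number), are exactly the right bookkeeping and are, if anything, more careful than the paper itself.
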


\section{Lower bound for mosaic number}

At the beginning of this paper we used Theorem \ref{thm:lowerbound} to relate crossing number and mosaic number. In a similar fashion, Theorem \ref{thm:newub} may be used to bound a knot's mosaic number from below. First we define 
\begin{align*}
  B_{1} &= \sqrt{2 + c} + 2\\
  B_{2} &= \frac{5 + \sqrt{4c - 3}}{2}
\end{align*}
As a corollary to Theorem~\ref{thm:newub}, we have

\begin{cor}[New Lower Bound for Mosaic Number]
\label{cor:newlb}
Let $K$ be a knot with crossing number $c$ and mosaic number $m$. Then $m \geq \min\{B_{1},B_{2}\}$.
\end{cor}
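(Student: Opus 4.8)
The plan is to simply invert the two crossing-number inequalities of Theorem~\ref{thm:newub}, one for each parity of $m$, and then combine them. The key observation that makes this a clean corollary is that the mosaic number $m$ of a fixed knot has a definite parity, so exactly one of the two bounds applies to it; we then show that each bound forces $m$ past the corresponding threshold $B_1$ or $B_2$, and since $m$ is certainly at least whichever is smaller, the result follows.

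First I would treat the odd case. If $m$ is odd, Theorem~\ref{thm:newub} gives $c \leq (m-2)^2 - 2$, hence $(m-2)^2 \geq c + 2$. Since $m \geq 2$, taking square roots preserves the inequality and yields $m - 2 \geq \sqrt{c+2}$, i.e. $m \geq \sqrt{c+2} + 2 = B_1$. This step is purely algebraic and presents no difficulty.

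Next I would treat the even case, which is the only place any care is needed. If $m$ is even, Theorem~\ref{thm:newub} gives $c \leq (m-2)^2 - (m-3)$, and expanding the right-hand side produces $c \leq m^2 - 5m + 7$, equivalently the quadratic inequality $m^2 - 5m + (7 - c) \geq 0$. The roots of $m^2 - 5m + (7-c)$ are $\frac{5 \pm \sqrt{4c - 3}}{2}$, so the inequality holds precisely when $m$ lies at or below the smaller root or at or above the larger root $B_2 = \frac{5 + \sqrt{4c-3}}{2}$. The main (and essentially only) obstacle is to rule out the spurious branch: I would observe that for any nontrivial knot $c \geq 3$, so $\sqrt{4c-3} \geq 3$ and the smaller root $\frac{5 - \sqrt{4c-3}}{2} \leq 1$, which is well below the mosaic number of any knot. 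Hence $m$ must satisfy $m \geq B_2$.

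Finally I would combine the two cases. Because $m$ is either odd or even, we have shown $m \geq B_1$ or $m \geq B_2$, and in either situation $m \geq \min\{B_1, B_2\}$, which is the claim. I would also note in passing that $4c - 3 > 0$ for every nontrivial knot, so $B_2$ is well defined throughout.
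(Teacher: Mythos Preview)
Your argument is correct and is exactly the computation the paper has in mind: the paper states Corollary~\ref{cor:newlb} immediately after Theorem~\ref{thm:newub} with the phrase ``As a corollary to Theorem~\ref{thm:newub}, we have'' and gives no further proof, so inverting the two parity-dependent inequalities and taking the minimum is precisely the intended derivation. Your handling of the spurious branch in the even case via $c\geq 3$ for nontrivial knots (hence $m\geq 4$) is the right way to close the only potential gap.
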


This will prove useful in future computations of mosaic number. For now, we will briefly explore the behavior of $B_{1}$ and $B_{2}$. It is easy to see that $B_{1}$ and $B_{2}$ are asymptotic, as
$$
\lim_{c\rightarrow\infty} \frac{B_{1}}{B_{2}} = 1.
$$
Informally, this means that $B_{1}$ and $B_{2}$, as functions of $c$, grow at relatively the same rate. A stronger result is that the difference $|B_{1} - B_{2}|$ is bounded by $\frac{1}{2}$, and although this difference is always increasing, it turns out that
$$
\lim_{c\rightarrow\infty} |B_{1} - B_{2}| = \tfrac{1}{2}.
$$
This result may be somewhat surprising: our work has shown that the even and odd cases require different approaches, but in reality the estimates for each are actually quite similar and the predictive power of one never strays too far from the other.

\section{Future research directions}

Our research has provoked several questions about knot mosaics which are left open to further investigation.

\begin{question}
How does mosaic number behave for the connect sum of knots?
\end{question}

\begin{question}
Are there any knots whose mosaic number is 2 greater than the number predicted by Theorem \ref{thm:newub}?
\end{question}

\begin{question}
What are the mosaic numbers for all knots of 10 crossings or fewer? 
\end{question}

We note that  Lee, Ludwig, Paat, and Peiffer compute the mosaic
number of all prime knots of 8 crossings or fewer \cite{LLPP} so progress is currently being made in this direction.

It is also natural to look at a more general class of mosaics where instead of insisting the board be $n \times n$ we allow it to be $n \times m$.  One might then define the rectangular mosaic number in terms of the number of tiles in the mosaic or perhaps even better the number of tiles on its interior.  Mosaics that need not be square should allow for more efficient embeddings especially in the case of knots that are not prime.
 
\begin{question}
How does crossing number relate to rectangular mosaic number?
\end{question}

The authors in \cite{lee} establish an upper bound on the mosaic number of a knot using arc index, and use this to prove stronger bounds for several classes of knots. Their results, together with Theorem \ref{thm:newub} and Corollary \ref{cor:newlb} of this paper, provide a clearer picture of the relation between the mosaic number and crossing number of a knot. However, in general a more precise formula that relates the mosaic number and crossing number of a knot would be desirable.  The mosaic number of some relatively simple knots remains unknown including many prime knots with 9 crossings and some composite knots with fewer than 9 crossings.

%\begin{conj}
%$6_{1}$ and $6_{3}$ have mosaic number 6.
%\end{conj}

Note that a $5 \times 5$ board is simple enough that it can only support 9 crossings.  It is likely that one could build these composite knots on a $6 \times 6$ board and then simply analyze all possible $5 \times 5$ boards to establish the results for some of these composite knots, but for knots with higher crossing numbers the complexity of their mosaic representations increases rapidly making a case by case analysis impractical.

\bigskip

Finally, we suggest an extension of knot mosaics to three dimensions. Using cubic blocks rather than square tiles, we can represent a knot in its three dimensional form without imposing crossings onto a
two dimensional representation, while still maintaining a degree of rigidity. Define a \emph{standard cube} as an analog of a mosaic tile: a cube contains 0, 1, 2 or 3 strands, and each face of the cube intersects at most 1 strand in the center of the face. In addition, for each strand within a cube there is at most one critical point in any direction on the interior arc of the strand. Define an \emph{n-cubic knot} as an $n\times n\times n$ array of suitably connected standard cubes. Furthermore, define the \emph{grid number} of a knot (analog of mosaic number) to be the smallest natural number $g$ such that the knot is representable as a $g$-cubic knot. Note that mosaic number is a (bad) upper bound for grid number. Further research questions on the topic of cubic knots can be asked such as the one proposed below.

\begin{question}

For a knot $K$ with mosaic number $m$ and grid number $g$, $g\leq m$. Find a sharper upper bound for $g$.
\end{question}

The authors would like to thank Sam Lomonaco and Lou Kauffman for use of many of the figures in this paper, as well as Joe Paat and Lew Ludwig for inspirational discussions resulting from their paper with Erica Evans \cite{lep}.

%-------------------------------------------------------------------------------------------------------------------------------------------------------------------------------------

%Bibliography

%-------------------------------------------------------------------------------------------------------------------------------------------------------------------------------------

%-------------------------------------------------------------------------------------------------------------------------------------------------------------------------------------

%END

%-------------------------------------------------------------------------------------------------------------------------------------------------------------------------------------

\end{document}